\newcommand{\R}{\mathbb{R}}
\newcommand{\C}{\mathbb{C}}
\newcommand{\D}{\mathbb{D}}
\renewcommand{\Re}{\operatorname{Re}}
\renewcommand{\Im}{\operatorname{Im}}
\numberwithin{equation}{section}
\numberwithin{figure}{section}
\theoremstyle{plain} 
\newtheorem{theorem}{Theorem}[section]
\newtheorem{lemma}[theorem]{Lemma}
\newtheorem{proposition}[theorem]{Proposition}
\newtheorem{fact}[theorem]{Fact}
\theoremstyle{definition} 
\newtheorem{remark}[theorem]{Remark}
\title[Doubly periodic minimal surfaces]
{Genus three embedded doubly periodic minimal surfaces with parallel ends}
\date{April 15, 2026}
\author[P.~Connor]
{Peter Connor}
\address[Peter Connor]{Department of Mathematical Sciences\\Indiana University South Bend\\South
Bend\\IN 46634\\USA}
\email{pconnor@iusb.edu}
\thanks{The first author was supported in part by a Faculty Research Grant from Indiana University South Bend.}
\author[S.~Fujimori]
{Shoichi Fujimori}
\address[Shoichi Fujimori]{Department of Mathematics\\Hiroshima University\\Higashihiroshima\\Hiroshima 739-8526\\Japan}
\email{fujimori@hiroshima-u.ac.jp}
\thanks{
The second author was supported in part by Grant-in-Aid for Scientific Research (C) 
No. 21K03226 and 25K06977 from Japan Society for the Promotion of Science.
}
\author[P.~Marmorino]
{Phillip Marmorino}
\address[Phillip Marmorino]{Department of Earth, Atmospheric, and Planetary Science\\Purdue University\\West Lafayette\\IN  47907\\USA}
\email{pmarmori@purdue.edu}
\thanks{The third author was supported in part by a SMART Summer Fellowship from Indiana University South Bend.}
\author[T.~Shoda]
{Toshihiro Shoda}
\address[Toshihiro Shoda]{Department of Mathematics\\Kansai University\\Suita\\Osaka 564-8680\\Japan}
\email{tshoda@kansai-u.ac.jp}
\thanks{
The fourth author was supported in part by Grant-in-Aid for Scientific Research (C) 
No. 20K03616 and 24K06750 from Japan Society for the Promotion of Science.
}
\subjclass[2020]{Primary 53A10; Secondary 53C42, 49Q05}
\keywords{minimal surface, doubly periodic, embedding}
\begin{document}

\begin{abstract}
We construct a one-parameter family of embedded doubly periodic minimal surfaces of genus three with four parallel ends.
The Weierstrass data for each surface of the family are given and the two dimensional period problem is solved.
\end{abstract}

\maketitle

\section{Introduction}
A minimal surface $M$ in Euclidean space is called {\it doubly periodic} if it is invariant under two linearly independent translations, which can be assumed to be horizontal.  The first example of a doubly periodic minimal surface was discovered by Scherk \cite{sche1}.

Let $L$ be the two-dimensional lattice generated by the maximal group of these translations.  Meeks and Rosenberg \cite{mr3} showed that if $M/L$ is complete, properly embedded, and of finite topology then the quotient has a finite number of annular top and bottom ends, each of which is asymptotic to flat annuli.  This type of end is referred to as a {\it Scherk end}.  The top and bottom ends are either parallel or not.  In the parallel case, Meeks and Rosenberg proved that the number of top and bottom ends are equal to the same even number.  

Karcher \cite{ka4} and Meeks-Rosenberg \cite{mr3} constructed a three-parameter family of genus one examples with parallel ends.  Wei \cite{wei2} added a handle to one of the Karcher and Meeks-Roseberg examples to produce a one-parameter family of genus two examples.  Connor and Weber \cite{cw1} proved that, for each genus $g$, there is a three-parameter family of genus $g$ examples with parallel ends that have a foliation of $\R^3$ by vertical planes as a limit.  The Connor-Weber examples are only known to exist in a small neighborhood of the limit foliation.  The Karcher and Meeks-Rosenberg genus one examples and the Wei genus two example are all members of the Connor-Weber examples.  The long-term deformations of the Karcher and Meeks-Rosenberg examples and the Wei examples are already known.  Previously, the long-term deformation of higher genus Connor-Weber examples had not been established.  We address this issue in this paper in the case of one of the genus three Connor-Weber examples.  This is an important step in studying the moduli space of the Connor-Weber examples.  It also provides the opportunity to consider other possible limits of these families of surfaces.

The goal of this paper is to construct a one-parameter family $M_{\lambda}$ of embedded doubly periodic minimal surfaces with parallel ends with genus three in the quotient.  The parameter $\lambda$ ranges over a certain  open interval $(a_1,a_2)$ in $(0,1)$.  See Figure \ref{fig1}.  These surfaces are constructed using the Weierstrass Representation by adding a handle to Wei's genus two surfaces along a horizontal symmetry plane.

\begin{theorem}
There exists a one-parameter family $M_{\lambda}$ of properly embedded doubly periodic minimal surfaces with parallel ends such that $M_{\lambda}/L$ has genus three and four parallel ends, where $L$ is the full period lattice of the surface.
\label{thm1}
\end{theorem}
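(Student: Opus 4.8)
The plan is to produce each $M_\lambda$ explicitly through the Weierstrass representation, choosing the conformal model and the data so that the surface carries as large a symmetry group as possible, and then solving the resulting closing conditions. First I would fix the underlying conformal picture. The quotient $M_\lambda/L$ should be a compact genus-three Riemann surface $\Sigma_\lambda$ with four punctures at the ends, and since the surface is obtained by adding a handle to Wei's genus-two example along a horizontal symmetry plane, I expect $\Sigma_\lambda$ to be hyperelliptic with a defining equation $y^2 = P_\lambda(z)$ whose branch points are placed symmetrically and depend on the parameter $\lambda \in (0,1)$. On $\Sigma_\lambda$ I would prescribe the Gauss map $g$ and height differential $dh$ by their divisors: the four ends sit at points where the asymptotic vertical planes force $g$ onto the unit circle (after rotating so that the common end-normal is horizontal, $g = \pm 1$ there), $dh$ has simple poles at the ends producing the flat-annulus Scherk asymptotics, and the zeros and poles of $g$ at the finite vertical points are arranged so that the induced metric $\tfrac{1}{2}\left(|g| + |g|^{-1}\right)|dh|$ is complete and nonsingular. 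A Riemann--Hurwitz and Gauss--Bonnet count on this data should confirm genus three and a Gauss-map degree consistent with four parallel ends.

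Next I would exploit the symmetries to collapse the closing conditions. For $M_\lambda$ to descend to the quotient it must close up under two independent horizontal translations generating $L$ and carry no residual vertical period; written over a homology basis of $\Sigma_\lambda$ these are several real conditions on the periods of $(g^{-1}-g)\,dh$, $(g^{-1}+g)\,dh$, and $dh$. The horizontal reflection plane used to attach the handle, together with the vertical mirror symmetries and the hyperelliptic involution, should force most of these periods to be automatically real, purely imaginary, or paired off, so that the surviving independent real conditions reduce to the two dimensional period problem announced in the abstract. I would package these as two real-valued functions $p_1, p_2$ of the remaining free parameters, which for fixed $\lambda$ I expect to be essentially a L\'opez--Ros factor together with one conformal modulus.

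The heart of the matter, and the step I expect to be the main obstacle, is to show that the period map $(p_1,p_2)$ has a common zero for every $\lambda \in (0,1)$. I would approach this by a continuity and degree argument in the spirit of a two-dimensional intermediate value theorem (Poincar\'e--Miranda): first normalize one parameter using a vertical symmetry so that one period is controlled, then analyze the boundary of the admissible parameter region. In the degenerate limits---where the added handle pinches, or where $\lambda \to 0$ and $\lambda \to 1$---the configuration should converge either to Wei's already-solved genus-two surface or to a splitting into simpler pieces, and there I would compute the signs of $p_1$ and $p_2$ explicitly from the asymptotics of the underlying hyperelliptic period integrals. Establishing that these boundary signs force an interior zero, and that the period integrals vary continuously and with the right monotonicity in the parameters, is where the genuine analytic work lies.

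Finally I would prove properness and embeddedness. Using the reflection symmetries, a fundamental domain of $M_\lambda$ is bounded by planar symmetry curves and straight lines, and I would show that this fundamental piece is embedded---ideally a graph over a planar domain---so that Schwarz reflection across the symmetry planes and translation by $L$ reconstruct the full surface without introducing self-intersections. Because the four ends are parallel and occur with alternating top/bottom orientation, the asymptotic half-planes are pairwise disjoint for the correct lattice $L$, and the maximum principle applied across the symmetry planes rules out interior intersections; combined with completeness of the Weierstrass metric this yields a properly embedded doubly periodic minimal surface whose quotient has genus three and four parallel ends, as claimed.
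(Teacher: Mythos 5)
Your overall route is the paper's route: realize $M_\lambda$ via explicit Weierstrass data on a symmetric hyperelliptic genus-three curve (the paper takes $g_2(z)w^2=g_1(z)$, i.e.\ $y^2=g_1g_2$ after clearing denominators, with $G=w$ and $dh=dz/(z^2-a^2)$), use the reflection symmetries to kill all but two real period conditions, and then, for each \emph{fixed} $\lambda$, find a common zero of the two period integrals $(\xi_1,\xi_2)$ by a sign-on-the-boundary/Poincar\'e--Miranda argument. That is exactly what Proposition~\ref{pr:period12} does: the two free parameters are the branch-point moduli $(\lambda_1,\lambda_2)$ (not a L\'opez--Ros factor plus a modulus --- there is no free L\'opez--Ros parameter once the curve and $G=w$ are fixed), the admissible region is a parallelogram in the $(\lambda_1,\lambda_2)$-plane, and the boundary signs are computed in the degenerations $\lambda_1\to1$, $\lambda_2\to\lambda_1$, and $\lambda_1,\lambda_2\to\infty$; the zero-level curves of $\xi_1$ and $\xi_2$ are then forced to cross. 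Your suggestion to also degenerate in $\lambda$ itself is a red herring (the period problem must be solved separately for each $\lambda$), but the fixed-$\lambda$ boundary analysis you describe is the actual mechanism.

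The genuine gap is in your embeddedness step. You state the correct goal --- show that one fundamental piece bounded by the symmetry curves is a graph, then rebuild $M_\lambda$ by Schwarz reflection --- but you give no method for establishing the graph property, and the two mechanisms you do invoke cannot supply it: disjointness of the asymptotic half-planes says nothing about the compact core, and ``the maximum principle across the symmetry planes'' only helps to compare \emph{distinct} pieces after each piece is already known to be embedded. The paper's proof of Proposition~\ref{pr:embedded} requires a specific detour that your plan is missing: pass to the \emph{conjugate} surface $\Omega_1^*$, whose boundary consists of straight segments rather than planar geodesics; double it by $180^\circ$ rotation; identify the doubled piece, via the existence-\emph{and-uniqueness} statement of the Jenkins--Serrin theorem (Fact~\ref{th:JS}) over a convex rectangle with boundary data $\pm\infty$, as a Jenkins--Serrin graph; and only then transfer the graph property back to $\Omega_1$ by Krust's theorem (Fact~\ref{th:krust}), which is precisely the statement that graphs over convex domains have graphical conjugates. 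Without this conjugate-surface/Krust/Jenkins--Serrin argument (or a genuine substitute for it), the claim that the fundamental piece is a graph --- and hence the embeddedness assertion in Theorem~\ref{thm1} --- remains unproved.
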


This theorem is proven in Sections \ref{wr} through \ref{embedded}. 
The data of the surface $M_{\lambda}$ are given in Section \ref{wr}, Equations \eqref{eq:wr}, \eqref{eq:R}, and \eqref{eq:G-dh} with three parameters $\lambda$, $\lambda_1$, and $\lambda_2$. 
Then in Section \ref{period}, Proposition~\ref{pr:existence}, we prove that for any $\lambda\in (a_1,a_2)$ there exist $\lambda_1$ and $\lambda_2$ so that the surface $M_\lambda$ is well-defined as a doubly periodic minimal surface. 
The embeddedness of $M_\lambda$ is proven in Section \ref{embedded}, Proposition~\ref{pr:embedded}. 
Although we show the existence of $M_\lambda$ for $\lambda\in (a_1,a_2)\subset (0,1)$, 
our numerical experiments show that $M_\lambda$ exists for any $\lambda\in (0,1)$.  
Proposition~\ref{pr:existence} is proven by utilizing the Poincare-Miranda theorem. %
The restriction of the range of $\lambda$ is needed in order to ensure the period problem has a solution.

\begin{figure}[htbp] 
\centering
 \includegraphics[height=2in]{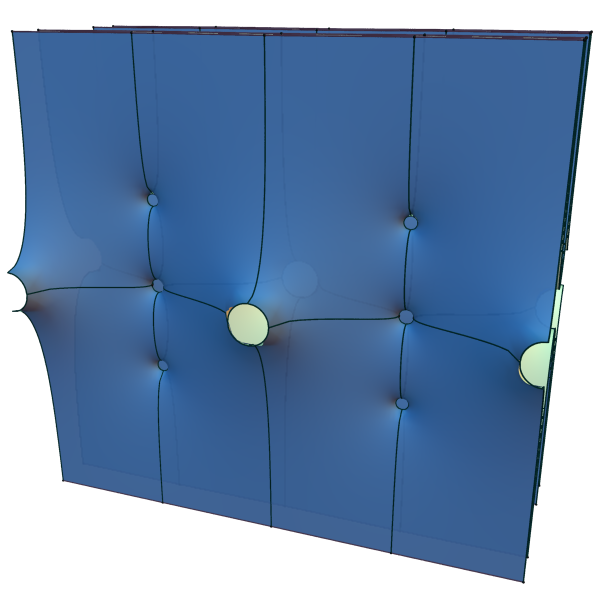} \
 \includegraphics[height=2in]{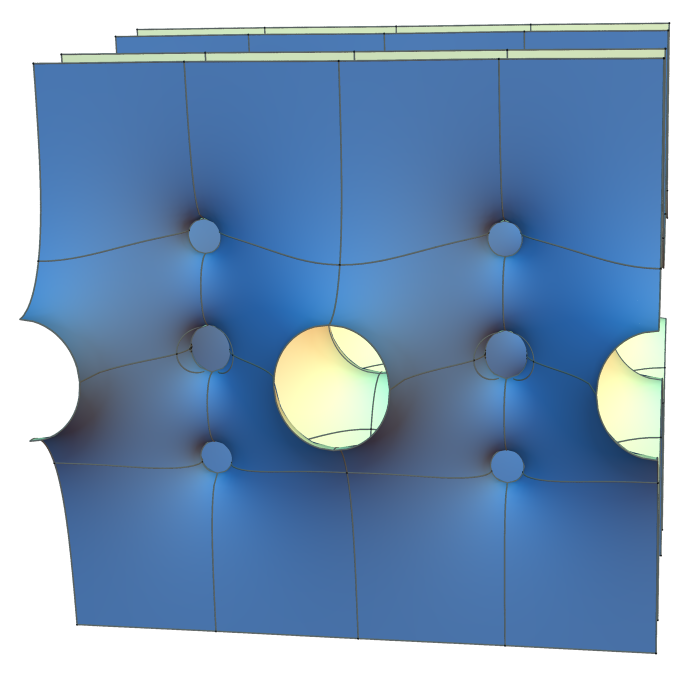} \
 \includegraphics[height=2in]{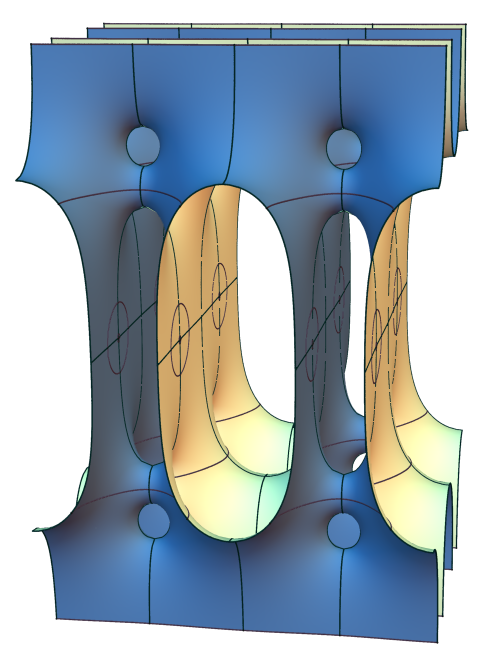} 
\caption{$M_{\lambda}$ with $\lambda=0.2$ (left), $\lambda=0.5$ (center), and $\lambda=0.99$ (right).}
\label{fig1}
\end{figure} 

Connor and Weber showed the existence of these surfaces in a small neighborhood of their limit as a foliation of $\R^3$ by vertical planes.  This result significantly expands the known parameters for which these surfaces exist.  As $\lambda\rightarrow0$, $M_{\lambda}$ limits as a foliation of vertical planes, the Connor-Weber limit.  Numerical experiments indicate that, as $\lambda\rightarrow1$, $M_{\lambda}$ limits as two copies of a genus one Scherk surface glued together end-to-end.

Weber \cite{web} constructed a two-parameter family of triply periodic minimal surfaces of genus 5, which is intuitively considered as the result of adding two handles to a family of Schwarz CLP surfaces.  
See Figure~\ref{fg:tpms}. 
The surfaces $M_{\lambda}$ can be considered as limiting surfaces of these triply periodic minimal surfaces. 

\begin{figure}[htbp] 
\centering
 \includegraphics[height=2in]{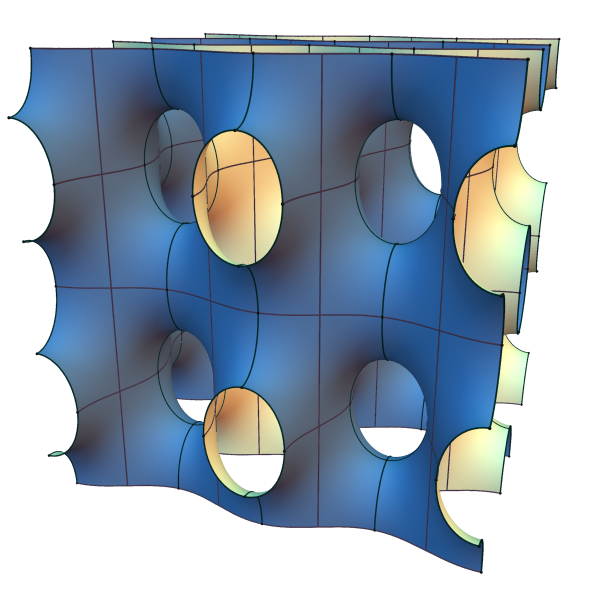} \
 \includegraphics[height=2in]{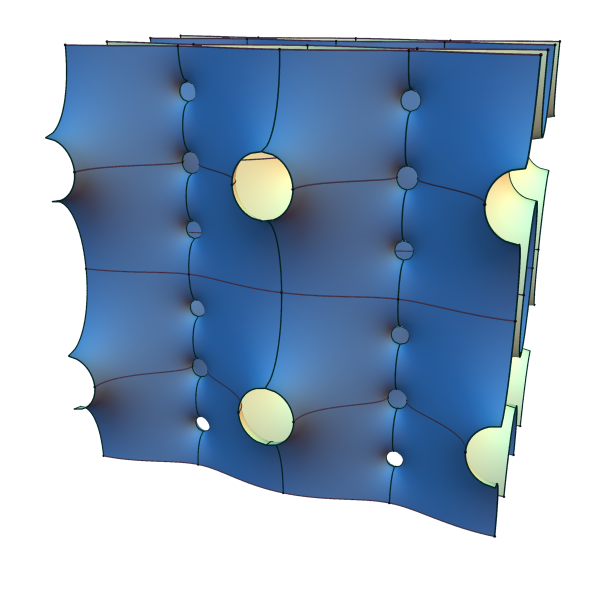} \
 \includegraphics[height=2in]{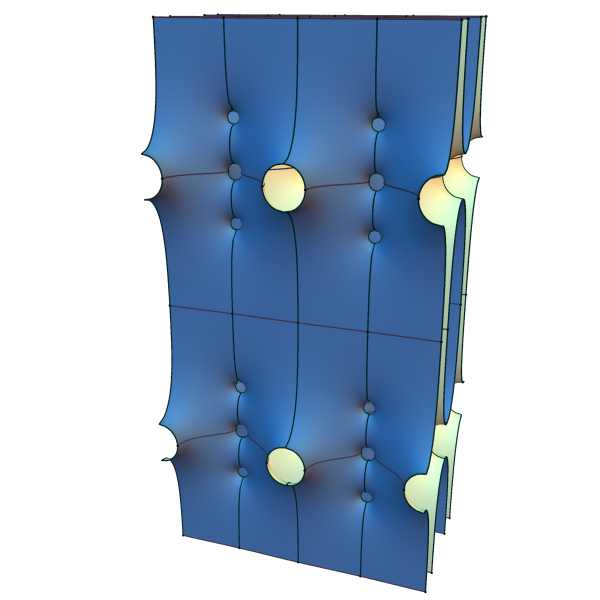} 
\caption{Left: Triply periodic minimal surface of genus 3, called Schwarz CLP. Center: Weber's triply periodic minimal surface of genus 5. Right: A limit of the Weber's surface.}
\label{fg:tpms}
\end{figure} 

 
\section{Weierstrass Representation}
\label{wr}
We use the Weierstrass Representation of minimal surfaces. 
Let $M$ be a minimal surface and $R$ the underlying Riemann surface of $M$.  
Then $M$ can be expressed by $X:R\to\R^3$, 
\begin{equation}\label{eq:wr}
X(z)=\Re\int_{z_0}^z(\phi_1,\phi_2,\phi_3),
\end{equation}
where 
\[
(\phi_1,\phi_2,\phi_3)=\left(\frac{1}{2}\left(\frac{1}{G}-G\right)dh,\frac{i}{2}\left(\frac{1}{G}+G\right)dh,dh\right),
\]
$z_0,z\in R$, $G$ a meromorphic function called the Gauss map, and $dh$ a holomorphic one-form called the height differential.  
This is the Weierstrass Representation.  See for example \cite{ka6} for details. 
We call $(R,G,dh)$ the Weierstrass data of the minimal surface $M$.

Conversely, given a Riemann surface $R$ which is biholomorphic to a compact Riemann surface $\overline{R}$ with finitely many points removed, a meromorphic function $G$ and a holomorphic one-form $dh$ on $R$, the triple $(R,G,dh)$ is the Weierstrass representation for a doubly periodic minimal surface with horizontal periods $\bm{v}_1$ and $\bm{v}_2$ and vertical Scherk ends if the following conditions are met.
\begin{enumerate}
\item\label{item:zeros}
The zeros of $dh$ are the zeros and poles of $G$ on $R$ with the same multiplicity.
\item\label{item:ends}
$G$ and $dh$ extend meromorphically to $\overline{R}$, and 
the height differential $dh$ has poles of order one and $G$ has finite value at the ends $\overline{R}-R$.
\item\label{item:period}
For each closed curve $\gamma$ on $R$,
\[
\Re\int_{\gamma}(\phi_1,\phi_2,\phi_3)=(0,0,0)\pmod{\bm{v}_1,\bm{v}_2}.
\]
\end{enumerate}
 
Of course, $X$ depends on the path of integration.  Condition \ref{item:period} ensures that $X$ is well-defined and is called the {\it period problem}.

\begin{remark}\label{rm:conj}
Denoting the universal cover of $R$ by $\widetilde{R}$, the minimal surface $X^*:\widetilde{M}\to\R^3$ with the Weierstrass data $(R,G,idh)$ is called the {\it conjugate surface} to $M$, and is denoted by $M^*$. 
It is known that any curve of $R$ which is mapped by $X$ to a nonstraight planar geodesic of $M$ is mapped by $X^*$ to a straight line in $M^*$. 
Furthermore, since the Gauss map $G$ and the first fundamental form $(|G|^{-1}+|G|)^2|dh|^2/4$ are the same for both $M$ and $M^*$, it follows that the planar geodesic in $M$ will lie in a plane perpendicular to the corresponding line in $M^*$ and that the planar geodesic in $M$ will have the same length as the line in $M^*$.
\end{remark}

\begin{figure}[htbp] 
\centering
 \includegraphics[width=2.2in]{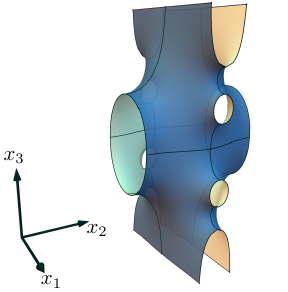} \ \ 
 \includegraphics[width=2.2in]{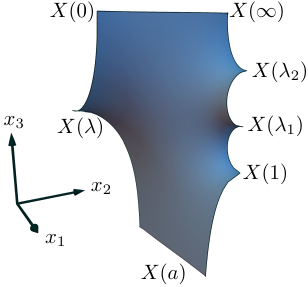} 
\caption{The constructed surface (left) and a fundamental domain, one-eighth of the surface (right).}
\label{fp-fp8}
\end{figure} 

To give the Weierstrass data $(R,G,dh)$ of our minimal surfaces, we need some preparations.  Throughout this paper, we assume $\lambda$, $\lambda_1$, and $\lambda_2$ 
are real constants such that 
$$0<\lambda<1<\lambda_1<\lambda_2.$$   

The surfaces in this paper are constructed by adding a handle to Wei's genus two doubly periodic minimal surfaces \cite{wei2}. 
We would like to keep the same symmetry, embeddedness, and number of ends as Wei's surfaces. 
See Figure~\ref{fp-fp8}. 
The underlying Riemann surface $R$ is hyperelliptic with 8 branch points minus 4 points. 
Since each surface is symmetric with respect to 3 coordinate planes, the surface consists of 8 congruent pieces, and we may assume that the first quadrant of the $z$-plane of $R$ corresponds to one of the 8 pieces.  
Corresponding to the saddle points at the bottom and top of each handle on the surface, the Gauss map $G(z)$ needs zeros at $z=\lambda$ and $z=\lambda_1$ and poles at $z=1$ and $z=\lambda_2$. 
Furthermore, we want $G(z)\in\mathbb{R}$ for $z\in [0,\lambda)\cup (1,\lambda_1)\cup (\lambda_2,\infty)$ and 
$G(z)\in i\mathbb{R}$ for $z\in (\lambda,1)\cup (\lambda_1,\lambda_2)$, 
and $|G(z)|=1$ for $z\in i\mathbb{R}_{\ge 0}$. 
Then due to the last condition above, the image of the imaginary axis lies in a plane parallel to the $(x_1,x_2)$-plane. 
Taking a reflection with respect to this plane, we have that $G(z)$ has poles at $z=-\lambda$ and $z=-\lambda_1$ and zeros at $z=-1$ and $z=-\lambda_2$. 
We set 
\begin{align*}
g_1(z)&=(z-\lambda)(z+1)(z-\lambda_1)(z+\lambda_2), \\
g_2(z)&=(z+\lambda)(z-1)(z+\lambda_1)(z-\lambda_2). 
\end{align*}Then $G(z)$ must satisfy 
\[
G(z)^2=\frac{g_1(z)}{g_2(z)}. 
\]
Next we want the surface to have an end at $z=a$ which is asymptotic to vertical planes parallel to the $(x_1,x_3)$-plane.  
This happens when $G(a)^2=-1$, which takes place when 
\begin{equation}\label{eq:a}
a=\sqrt{\dfrac{\alpha -\sqrt{\alpha^2-4\lambda\lambda_1\lambda_2}}{2}},
\end{equation}
where 
\begin{equation}\label{eq:alpha}
\alpha=\lambda+\lambda_1-\lambda_2-\lambda\lambda_1+\lambda\lambda_2+\lambda_1\lambda_2.
\end{equation}
Then the height differential $dh$ must be written as 
$
dh=dz/(z^2-a^2).
$

Now we consider the properties of the parameters of the surfaces. 
First, the above $g_1(z)$ and $g_2(z)$ satisfy  
\[
g_1(z)+g_2(z)=2(z^2-a^2)(z^2-b^2), 
\]
where 
\begin{equation}\label{eq:b}
  b=\sqrt{\dfrac{\alpha +\sqrt{\alpha^2-4\lambda\lambda_1\lambda_2}}{2}},
\end{equation}
and $a$ and $\alpha$ are as in \eqref{eq:a} and \eqref{eq:alpha} respectively.
It is easy to verify that $\alpha$ satisfies 
\[
2\lambda < \alpha < 2\lambda_2^2. 
\] 
Also, $a$ and $b$ satisfy the following lemma. 
\begin{lemma}\label{lm:ab}
Both $a$ and $b$ as above are real and satisfy
\[
\lambda < a < 1
\qquad\text{and}\qquad
\lambda_1 < b < \lambda_2.
\]
\end{lemma}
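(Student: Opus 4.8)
The plan is to reduce everything to locating the two roots of a single quadratic. Since $g_2(z)=g_1(-z)$, the sum $g_1+g_2$ is even, and the stated identity $g_1(z)+g_2(z)=2(z^2-a^2)(z^2-b^2)$ shows that $a^2$ and $b^2$ are exactly the two roots of
\[
h(t)=t^2-\alpha t+\lambda\lambda_1\lambda_2,
\]
with $a^2$ the root carrying the minus sign, so $a^2\le b^2$. Indeed, the symmetric functions of the roots are $a^2+b^2=\alpha$ and $a^2b^2=\lambda\lambda_1\lambda_2$, which match the displayed formulas for $a$ and $b$. The key observation driving the proof is that $g_1(z)+g_2(z)=2\,h(z^2)$, so I can read off the sign of $h$ at the four test points $t=\lambda^2,1,\lambda_1^2,\lambda_2^2$ simply by evaluating $g_1+g_2$ at $z=\lambda,1,\lambda_1,\lambda_2$.

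At each of these four values of $z$ one of the two quartics vanishes (because $z=\lambda,\lambda_1$ are roots of $g_1$ while $z=1,\lambda_2$ are roots of $g_2$), so the sign of $h$ at the corresponding point is just the sign of a single product of four linear factors, each of whose signs is forced by the hypothesis $0<\lambda<1<\lambda_1<\lambda_2$. Carrying out these routine sign checks I expect to find
\[
h(\lambda^2)>0,\qquad h(1)<0,\qquad h(\lambda_1^2)<0,\qquad h(\lambda_2^2)>0.
\]
The single inequality $h(1)<0$ already settles the reality claim: a real quadratic with positive leading coefficient that is negative at $t=1$ must have two distinct real roots straddling $1$, and since their product $\lambda\lambda_1\lambda_2$ is positive, both roots are positive. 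Hence $a^2<1<b^2$ are positive reals and $a,b$ are well-defined positive reals (equivalently, the discriminant $\alpha^2-4\lambda\lambda_1\lambda_2$ is positive).

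It remains to upgrade the remaining sign conditions to the two-sided bounds. From $h(\lambda_1^2)<0$ I get $a^2<\lambda_1^2<b^2$ directly. The signs $h(\lambda^2)>0$ and $h(\lambda_2^2)>0$ each only assert that the test point lies outside the open interval $(a^2,b^2)$, so here is the one place that needs a little care: I must rule out the wrong alternative. Since $\lambda^2<1<b^2$, the point $\lambda^2$ cannot exceed $b^2$, so $h(\lambda^2)>0$ forces $\lambda^2<a^2$; symmetrically, since $\lambda_2^2>\lambda_1^2>a^2$, the point $\lambda_2^2$ cannot lie below $a^2$, so $h(\lambda_2^2)>0$ forces $\lambda_2^2>b^2$. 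Combining these yields $\lambda^2<a^2<1$ and $\lambda_1^2<b^2<\lambda_2^2$, and taking positive square roots gives $\lambda<a<1$ and $\lambda_1<b<\lambda_2$. The only genuine obstacle is this final step of pairing each one-sided sign datum with the a priori ordering of the test points; once that bookkeeping is done correctly, everything else is a mechanical sign computation.
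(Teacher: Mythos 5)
Your proof is correct, and it arrives at the same sign data as the paper by a cleaner mechanism. Both arguments ultimately locate $a^2$ and $b^2$ as the two roots of the quadratic $h(t)=t^2-\alpha t+\lambda\lambda_1\lambda_2$ and read off their position from the sign of $h$ at test points. The paper does this by verifying, through direct expansion, the four factorization identities \eqref{eq:a-2l}--\eqref{eq:a-2l2}, which are precisely the statements that $-4h(\lambda)<0$, $-4h(1)>0$, $-4h(\lambda_1^2)>0$, $-4h(\lambda_2^2)<0$, and then manipulating inequalities between $\sqrt{\alpha^2-4\lambda\lambda_1\lambda_2}$ and $\big|\alpha-2t\big|$, including a small case analysis such as $a^2<\min\{1,\alpha-1\}$. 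Your observation that $g_1(z)+g_2(z)=2h(z^2)$, together with the fact that one of $g_1,g_2$ vanishes at each of $z=\lambda,1,\lambda_1,\lambda_2$, makes three of those four identities transparent with no expansion at all: for instance $-4h(1)=-2g_1(1)=4(1-\lambda)(\lambda_1-1)(1+\lambda_2)$, which is exactly \eqref{eq:a-2}. Your ``straddle'' logic (a monic quadratic negative at a point has two real roots surrounding that point; a positive value combined with the a priori ordering of the test points pins down on which side the point lies) is also tidier than the paper's square-root manipulations, and it disposes of the reality of $a,b$ and the positivity of the discriminant in one stroke. The one thing the paper's version buys is its choice of test point $t=\lambda$ rather than $t=\lambda^2$: identity \eqref{eq:a-2l} yields the strictly stronger bound $a>\sqrt{\lambda}$, recorded in \eqref{eq:const}, which your trick cannot reproduce since neither $g_1$ nor $g_2$ vanishes at $z=\sqrt{\lambda}$. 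That stronger bound is not needed for the lemma as stated, and the later sections only use $\lambda<a<1$, so your argument fully suffices.
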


\begin{proof}
By direct computations, we have
\begin{align}
(\alpha^2-4\lambda\lambda_1\lambda_2)-(\alpha-2\lambda)^2 
&=-4\lambda (1-\lambda)(\lambda_2-\lambda_1)<0, 
\label{eq:a-2l} \\
(\alpha^2-4\lambda\lambda_1\lambda_2)-(\alpha-2)^2 
&=4(1-\lambda)(\lambda_1-1)(1+\lambda_2)>0, 
\label{eq:a-2} \\
(\alpha^2-4\lambda\lambda_1\lambda_2)-(\alpha-2\lambda_1^2)^2 
&=4\lambda_1(\lambda_1-1)(\lambda+\lambda_1)(\lambda_2-\lambda_1)>0, 
\label{eq:a-2l1} \\
(\alpha^2-4\lambda\lambda_1\lambda_2)-(\alpha-2\lambda_2^2)^2 
&=-4\lambda_2(\lambda_2-\lambda)(1+\lambda_2)(\lambda_2-\lambda_1)<0.
\label{eq:a-2l2} 
\end{align}
By \eqref{eq:a-2} or \eqref{eq:a-2l1}, 
we see $\alpha^2-4\lambda\lambda_1\lambda_2>0$ 
and hence both $a$ and $b$ are positive real. 
Since $\alpha >2\lambda$, \eqref{eq:a-2l} yields 
\[
\sqrt{\alpha^2-4\lambda\lambda_1\lambda_2} < \alpha-2\lambda . 
\]
This shows $\sqrt{\lambda}<a$. 
By \eqref{eq:a-2}, we have
\[
\sqrt{\alpha^2-4\lambda\lambda_1\lambda_2}>\big|\alpha-2\big|.
\]
Thus
\[
a^2=\frac{\alpha-\sqrt{\alpha^2-4\lambda\lambda_1\lambda_2}}{2}
<\frac{\alpha-\big|\alpha-2\big|}{2}
=\begin{cases}
 1 & (\alpha-2\ge 0) \\
 \alpha -1 & (\alpha -2<0). \\
 \end{cases}
\]
This implies $a<1$.
Similarly, \eqref{eq:a-2l1} and \eqref{eq:a-2l2} imply $\lambda_1<b$ and $b<\lambda_2$, respectively. 

Therefore we have
\begin{equation}\label{eq:const}
0<\lambda<\sqrt{\lambda}<a<1<\lambda_1<b<\lambda_2. 
\qedhere
\end{equation}
\end{proof}

Now we are ready to give the Weierstrass data of our surfaces.  
We consider the following compact Riemann surface of genus 3
\[
\overline{R}=\{(z,w)\in (\mathbb{C}\cup\{\infty\})^2\;;\;g_2(z)w^2=g_1(z)\}
\]
and define
\begin{equation}\label{eq:R}
R=\overline{R}\setminus\{(a,i),(-a,i),(a,-i),(-a,-i)\}. 
\end{equation}
Define the Gauss map and height differential by
\begin{equation}\label{eq:G-dh}
G=w,\qquad dh=\frac{dz}{z^2-a^2}. 
\end{equation}
We denote by $M_\lambda$ the minimal surfaces with the Weierstrass data \eqref{eq:R} and \eqref{eq:G-dh}.  Note that Weierstrass Representation conditions \ref{item:zeros} and \ref{item:ends} are satisfied by $G$ and $dh$. In Section \ref{period} we prove that for each $\lambda\in (0,1)$, there exist $\lambda_1$ and $\lambda_2$ so that the period problems are solved. 
This is why we denote the subscript $\lambda$ of $M_\lambda$. 


The following lemma is obvious. 

\begin{lemma}\label{lm:sym}
The Riemann surface $\overline{R}$ has automorphisms 
\[
\tau_1:(z,w)\mapsto(\overline{z},-\overline{w}),\qquad
\tau_2:(z,w)\mapsto(\overline{z},\overline{w}),\qquad
\tau_3:(z,w)\mapsto(-\overline{z},\overline{w}^{-1}),
\]
with 
\[
\tau_1^*(\phi_1,\phi_2,\phi_3)=(-\overline{\phi_1},\overline{\phi_2},\overline{\phi_3}),\quad
\tau_2^*(\phi_1,\phi_2,\phi_3)=(\overline{\phi_1},-\overline{\phi_2},\overline{\phi_3}),\quad
\tau_3^*(\phi_1,\phi_2,\phi_3)=(\overline{\phi_1},\overline{\phi_2},-\overline{\phi_3}).
\]
\end{lemma}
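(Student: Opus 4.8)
The plan is to verify the lemma in two stages: first, that each $\tau_j$ is a well-defined antiholomorphic automorphism of $\overline{R}$ that preserves the puncture set defining $R$, and second, that the nine pullback relations on the one-forms hold by direct computation. The genuine content is concentrated in a few algebraic identities; the rest is bookkeeping.

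For the first stage, the essential algebraic facts are that $g_1$ and $g_2$ have real coefficients, so $g_j(\overline{z}) = \overline{g_j(z)}$, and that the factorizations give $g_1(-z) = g_2(z)$ and $g_2(-z) = g_1(z)$. To see that $\tau_1$ and $\tau_2$ send $\overline{R}$ to itself, I would conjugate the defining relation $g_2(z)w^2 = g_1(z)$ to obtain $g_2(\overline{z})\,\overline{w}^2 = g_1(\overline{z})$; since $(-\overline{w})^2 = \overline{w}^2$, both $(\overline{z},\overline{w})$ and $(\overline{z},-\overline{w})$ lie on $\overline{R}$. For $\tau_3$ I would substitute $z \mapsto -\overline{z}$ and use the interchange $g_1 \leftrightarrow g_2$ under $z \mapsto -z$: the conjugated defining equation rearranges to show that $(-\overline{z},\overline{w}^{-1})$ again satisfies $g_2 w^2 = g_1$. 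Since $a$ is real, a short check shows each $\tau_j$ permutes the four punctures $(\pm a, \pm i)$, so each descends to an automorphism of $R$.

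For the second stage I would compute the pullbacks by hand, taking care that the $\tau_j$ are antiholomorphic: the pullback of the coordinate function $z$ is its image, the pullback of $dz$ is the differential of that image, and constants are conjugated (in particular $\overline{i} = -i$, which is exactly what produces the sign flips in the $\phi_2$ relations). Writing $\phi_3 = dz/(z^2 - a^2)$ and using that $a$ is real, the substitutions $z \mapsto \overline{z}$ and $z \mapsto -\overline{z}$ give $\tau_1^*\phi_3 = \tau_2^*\phi_3 = \overline{\phi_3}$ and $\tau_3^*\phi_3 = -\overline{\phi_3}$. For $\phi_1$ and $\phi_2$ I would also track the action on $G = w$: under $\tau_1$ one finds $1/w - w \mapsto -\overline{(1/w - w)}$ and $1/w + w \mapsto -\overline{(1/w + w)}$; under $\tau_2$ each is simply conjugated; and under $\tau_3$, where $w \mapsto \overline{w}^{-1}$ interchanges $1/w$ and $w$ up to conjugation, one gets $1/w - w \mapsto -\overline{(1/w - w)}$ and $1/w + w \mapsto \overline{(1/w + w)}$. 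Combining these with the three values of $\tau_j^*\phi_3$, and remembering the factor $\overline{i} = -i$ coming from $\phi_2$, reproduces all nine stated relations.

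The computation is entirely routine, so there is no real obstacle; the only step demanding care is the $\tau_3$ case, where the reflection $z \mapsto -\overline{z}$ contributes a sign to $\phi_3$ while the inversion $w \mapsto \overline{w}^{-1}$ simultaneously interchanges $1/w$ and $w$, and where the structural identity $g_1(-z) = g_2(z)$ is precisely what makes $\tau_3$ an automorphism of $\overline{R}$ in the first place.
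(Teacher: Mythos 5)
Your proposal is correct, and it is exactly the routine verification the authors had in mind: the paper offers no proof at all, simply declaring the lemma obvious, and your argument (real coefficients of $g_1,g_2$, the interchange $g_1(-z)=g_2(z)$ that makes $\tau_3$ well defined, and sign-tracking through the antiholomorphic pullbacks with the extra sign from $\overline{i}=-i$ when comparing against $\overline{\phi_2}$) fills in precisely what ``obvious'' suppresses. One stylistic caution: strictly speaking the pullback itself does not conjugate the constant $i$; the conjugation enters only when you rewrite $\tau_j^*\phi_2$ as a multiple of $\overline{\phi_2}$ --- your sign bookkeeping treats it this way and lands on all nine correct signs, so the computation stands.
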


\section{Period Problem}
\label{period}

In this section we prove the following proposition.

\begin{proposition}\label{pr:existence}
We may choose an open interval $(a_1,a_2)\subset (0,1)$ satisfying the following 
property{\rm :}
for each $\lambda\in (a_1,a_2)$, 
there exist $\lambda_1$, $\lambda_2$ $(1<\lambda_1<\lambda_2)$ and a rectangular torus $\mathbb{T}^2$ such that \eqref{eq:wr}, considered as a map into $\mathbb{T}^2\times\mathbb{R}$,  with the Weierstrass data \eqref{eq:R} and \eqref{eq:G-dh} is single-valued on $R$. 
\end{proposition}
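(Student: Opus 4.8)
The plan is to use the three reflection symmetries of Lemma~\ref{lm:sym} to collapse the period conditions on $H_1(R)$ (which has rank $2\cdot 3+4-1=9$) down to a pair of real equations in the two free parameters $\lambda_1,\lambda_2$, and then to solve that $2\times 2$ system for each fixed $\lambda$ by a continuity argument. A preliminary observation disposes of the vertical direction entirely: since $dh=dz/(z^2-a^2)$ is the pullback under $(z,w)\mapsto z$ of a meromorphic one-form on $\C\cup\{\infty\}$ whose only singularities are the simple poles at $z=\pm a$ with the \emph{real} residues $\pm 1/(2a)$, every period $\int_\gamma dh$ is an integer multiple of $2\pi i/(2a)$ and is therefore purely imaginary. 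Hence $\Re\int_\gamma\phi_3=0$ for every closed $\gamma$, the $\R$-component of \eqref{eq:wr} is automatically single-valued, and the vertical part of condition~\ref{item:period} imposes no constraint. It remains to handle the horizontal periods, and the torus $\mathbb{T}^2$ will simply be taken to be $\R^2$ modulo the lattice $L$ generated by the horizontal period vectors, once that lattice is shown to have rank two.

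Next I would pin down the horizontal periods using the reflections. The involutions $\tau_1,\tau_2$ fix $\{z\in\R\}$ and $\tau_3$ fixes $\{z\in i\R\}$, and by the transformation rules in Lemma~\ref{lm:sym} they realize Euclidean mirror symmetries of $M_\lambda$ in the two orthogonal vertical planes $\{X_1=\mathrm{const}\}$ and $\{X_2=\mathrm{const}\}$ and the horizontal plane $\{X_3=\mathrm{const}\}$. Consequently $L$ is invariant under $X_1\mapsto-X_1$ and $X_2\mapsto-X_2$, and every symmetry curve is a planar geodesic contained in one of these mirror planes. Tracking the sign of $g_1/g_2$ along the real axis shows that $w$ is alternately real and imaginary on the intervals between the branch points $\pm\lambda,\pm1,\pm\lambda_1,\pm\lambda_2$ of Lemma~\ref{lm:ab}: on the real-$w$ intervals the curve lies in $\{X_2=0\}$ and on the imaginary-$w$ intervals it lies in $\{X_1=0\}$. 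A short computation using $g_1+g_2=2(z^2-a^2)(z^2-b^2)$ gives $\phi_2=i(z^2-b^2)/(w\,g_2)\,dz$, which is holomorphic at the ends, so the four ends contribute only to the $X_1$-translation (each residue giving period $\pm\pi/a$) and generate one lattice direction, while $\phi_1$ carries the pole. Restricting the period integrals to the real-axis intervals converts each surviving period into an ordinary real integral, and after accounting for the symmetries only two real quantities $P_1(\lambda_1,\lambda_2)$, $P_2(\lambda_1,\lambda_2)$ remain, measuring the failure of the handle-linking cycle to be commensurate with the lattice in the $X_1$- and $X_2$-directions. Via the conjugate-surface description of Remark~\ref{rm:conj}, these two conditions say equivalently that two explicitly described straight segments in $M_\lambda^*$ have matching lengths.

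Finally I would solve $P_1=P_2=0$. The period map $\mathcal{P}=(P_1,P_2)$ is continuous (indeed real-analytic) on the open region $\{1<\lambda_1<\lambda_2\}$ by standard dependence of the integrals on parameters, and Lemma~\ref{lm:ab} keeps $a,b$ in their intervals so that no integrand degenerates in the interior. The crux is producing a zero of $\mathcal{P}$ for each fixed $\lambda\in(0,1)$, and I would do this by analysing the boundary behaviour of $P_1,P_2$ as $(\lambda_1,\lambda_2)$ approaches the edges of the region ($\lambda_1\downarrow 1$, $\lambda_2\uparrow\infty$, and $\lambda_1\uparrow\lambda_2$, where branch points collide with $b$ and the surface degenerates toward the limits described in the introduction), reading off definite signs of $P_1$ and $P_2$ on the four sides of a suitable rectangle and invoking the two-dimensional intermediate value (Poincar\'e--Miranda) theorem to force an interior zero. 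This sign and asymptotic control of the period integrals in the degenerate limits is the main obstacle; the algebra of the symmetry reduction is routine by comparison. Once a solution $(\lambda_1,\lambda_2)$ is found, a final check that the two generating horizontal period vectors are linearly independent shows that $L$ is a genuine rank-two lattice, so $\mathbb{T}^2=\R^2/L$ is well defined and condition~\ref{item:period} holds.
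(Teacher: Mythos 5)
Your reduction is exactly the paper's: the vertical periods vanish because $dh$ pulls back from the $z$-sphere with real residues $\pm 1/(2a)$ at $z=\pm a$; the ends contribute only the horizontal vector $\bm{v}_1=(\pi/a,0,0)$ since $\phi_2$ is holomorphic on $\overline{R}$; the symmetries of Lemma~\ref{lm:sym} kill three of the six periods over the homology basis; one surviving period $\Re\int_{\gamma_1}\phi_2=v_2>0$ is absorbed as the second lattice generator $\bm{v}_2$; and the remaining two periods become a $2\times 2$ system in $(\lambda_1,\lambda_2)$, to be solved for each fixed $\lambda$ by a two-dimensional intermediate-value argument. This is precisely the skeleton of Section~\ref{period} and of Proposition~\ref{pr:period12}.

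The genuine gap is that the crux --- the sign and asymptotic control of the two period integrals $\xi_1,\xi_2$ on the boundary of the parameter region --- is only announced, never carried out; you yourself flag it as ``the main obstacle.'' That is where essentially all of the paper's work lies: Lemmas~\ref{lm:1} through \ref{lm:4} (substitutions such as $u=1/t$, the mean value theorem for integrals, and delicate limit estimates as $\lambda_1\to 1$, $\lambda_2\to\lambda_1$, and $\lambda_1,\lambda_2\to\infty$) exist solely to pin down those signs, and none of them is routine. Moreover, the clean Poincar\'e--Miranda configuration you envision (``definite signs of $P_1$ and $P_2$ on the four sides of a suitable rectangle'') is not actually available here: on the edge $\lambda_1\downarrow 1$, the sign of $\xi_1$ is controlled only where $\lambda\lambda_2>1$ (Lemma~\ref{lm:1}), while for $\lambda\lambda_2\le 1$ one can instead only control $\xi_2>0$ (Lemma~\ref{lm:5}); no single period function has a definite sign along that entire edge. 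The paper therefore splits this edge at the point $E=(1+\epsilon,1/\lambda)$ and replaces Miranda's theorem by a curve-intersection argument: the zero set $c_1$ of $\xi_1$ must separate $\overline{DE}$ from $\overline{BC}$ and hence runs from $\overline{CD}$ to $\overline{EA}\cup\overline{AB}$, while the zero set $c_2$ of $\xi_2$ runs from $\overline{BC}$ to $\overline{DE}$, forcing $c_1\cap c_2\ne\emptyset$ (Figure~\ref{fg:parallelogram}). So your outline identifies the right strategy, but as it stands it is a plan rather than a proof: the five sign lemmas are missing, and the topological endgame needs the modified configuration just described rather than the literal Miranda hypotheses.
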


Since $\phi_3$ can be written as
\[
\phi_3=d\left(\frac{1}{2a}\log\frac{z-a}{z+a}\right), 
\]
we see that 
\[
\Re\int_\gamma\phi_3 = 0
\]
for any loop $\gamma$ on $R$. 
 
Now we consider the periods of $\phi_1$ and $\phi_2$ for loops around the ends. 
Note that $\phi_2$ is holomorphic on $\overline{R}$.  In fact, we have
\[
\phi_2=\frac{i}{2}\frac{g_1(z)+g_2(z)}{g_2(z)w(z^2-a^2)}dz=\frac{i(z^2-b^2)}{g_2(z)w}dz, 
\]
and hence $\phi_2$ has no residues at the ends.  That is, 
$\phi_2$ has no periods at ends. 
We also note that the residues of $\phi_1$ at ends are either $i/(2a)$ or $-i/(2a)$. 
Therefore, for any loop $\gamma$ around the ends, we have
\[
\Re\int_{\gamma}(\phi_1,\phi_2,\phi_3)\in \{n\bm{v}_1:n\in\mathbb{Z}\},
\qquad
\bm{v}_1=(\pi/a,0,0).
\]

Next we consider the periods of $\phi_1$ and $\phi_2$ for the homology basis of $\overline{R}$. 
We fix the $z$-plane in $\overline{R}$ with $w(\infty)=+1$. 
Let $\gamma_j$ ($j=1,2,3$) be loops in the $z$-plane winding once 
around $[\lambda,1]$, $[1,\lambda_1]$, $[\lambda_1,\lambda_2]$, respectively, 
counterclockwise, 
and we denote by the same notations the lifts of $\gamma_j$ to $\overline{R}$. 
Then $\{\gamma_j,\tau_3\circ\gamma_j:j=1,2,3\}$ gives a homology basis of $\overline{R}$, where $\tau_3$ is defined in Lemma \ref{lm:sym}. 
By Lemma \ref{lm:sym}, it suffices consider the following six periods
\[
\Re\int_{\gamma_j}\phi_k,\qquad
(j=1,2,3,\;\;k=1,2).
\]
By the symmetry arguments as in \cite[Proof of Lemma 3.2]{wei2}, we have 
\[
\Re\int_{\gamma_1}\phi_1=
\Re\int_{\gamma_2}\phi_2=
\Re\int_{\gamma_3}\phi_1=0.
\]
So all we need to consider are the periods of 
\[
\Re\int_{\gamma_1}\phi_2,\qquad
\Re\int_{\gamma_2}\phi_1,\qquad
\Re\int_{\gamma_3}\phi_2.
\]
By a direct computation, we have 
\[
v_2:=\Re\int_{\gamma_1}\phi_2
=2\int_{\lambda}^1\frac{b^2-t^2}{\sqrt{(t^2-\lambda^2)(1-t^2)(\lambda_1^2-t^2)(\lambda_2^2-t^2)}}dt>0. 
\]
Thus we have the period $\bm{v}_2=(0,v_2,0)$ in the direction parallel to the $x_2$-axis. 
We set 
\begin{equation}\label{eq:torus}
\mathbb{T}^2=\mathbb{R}^2/\Lambda, 
\quad
\Lambda=\{n_1\bm{v}_1+n_2\bm{v}_2:n_1,n_2\in\mathbb{Z}\}. 
\end{equation}
If we have
\begin{equation}\label{eq:period12}
\Re\int_{\gamma_2}\phi_1=0
\quad\text{and}\quad
\Re\int_{\gamma_3}\phi_2=0,
\end{equation}
then we can conclude that $X$ is well-defined in $\mathbb{T}^2\times\mathbb{R}$. 
It is easy to verify that the left equation in \eqref{eq:period12} holds if and only if
\begin{equation}\label{eq:period1}
\int_1^{\lambda_1}\phi_1(t)=0, 
\end{equation}
where 
\[
\phi_1(t)=\frac{t\big((1-\lambda-\lambda_1+\lambda_2)t^2
+\lambda\lambda_1-\lambda\lambda_2-\lambda_1\lambda_2+\lambda\lambda_1\lambda_2\big)}
{(t^2-a^2)\sqrt{(t^2-\lambda^2)(t^2-1)(\lambda_1^2-t^2)(\lambda_2^2-t^2)}}dt,
\]
and the right equation in \eqref{eq:period12} holds if and only if
\begin{equation}\label{eq:period2}
\int_{\lambda_1}^{\lambda_2}\phi_2(t)=0,
\end{equation}
where 
\[
\phi_2(t)=\frac{t^2-b^2}{\sqrt{(t^2-\lambda^2)(t^2-1)(t^2-\lambda_1^2)(\lambda_2^2-t^2)}}dt.
\] 

We fix $\lambda\in (0,1)$ and set
\begin{equation}
\xi_1=\xi_1(\lambda_1,\lambda_2)
=\int_{1}^{\lambda_1}\phi_1(t), 
\qquad
\xi_2=\xi_2(\lambda_1,\lambda_2)
=\int_{\lambda_1}^{\lambda_2}\phi_2(t).
\end{equation}

We prove the following:

\begin{proposition}\label{pr:period12}
We may choose an open interval $(a_1,a_2)\subset (0,1)$ satisfying the following 
property{\rm :}
for any $\lambda\in (a_1,a_2)$, there exist $\lambda_1$ and $\lambda_2$ with $1<\lambda_1<\lambda_2$ 
so that \eqref{eq:period1} and \eqref{eq:period2} hold. 
\end{proposition}

To prove this proposition, we first give six lemmas. 
These lemmas will be proven in the Appendix.  

\begin{figure}[htbp] 
\centering
 \includegraphics[height=2.3in]{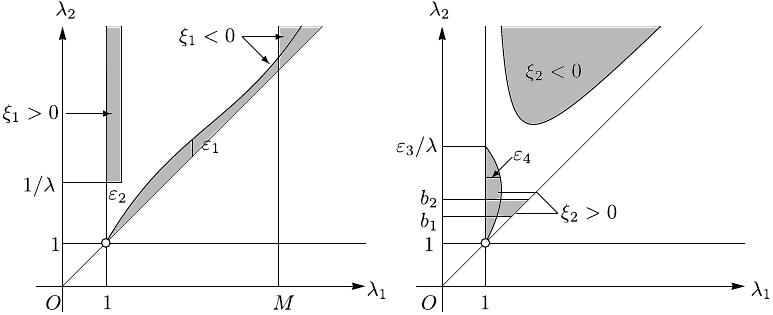} 
\caption{Signs of $\xi_1$ (left), and Signs of $\xi_2$ (right).}
\label{fig:xi1-xi2}
\end{figure} 

\begin{lemma}\label{lm:1}
For an arbitrary $\lambda_1>1$, there exists $\varepsilon_1=\varepsilon_1(\lambda_1)>0$ such that $\xi_1<0$ holds for 
$\lambda_1$, $\lambda_2$ satisfying $0<\lambda_2-\lambda_1 <\varepsilon_1$. 
See the left-hand side of Figure~$\ref{fig:xi1-xi2}$. 
\end{lemma}

\begin{lemma}\label{lm:2}
There exists $M>0$ such that $\xi_1<0$ holds for $\lambda_1$, $\lambda_2$ satisfying 
$\lambda_2>\lambda_1 >M$. 
See the left-hand side of Figure~$\ref{fig:xi1-xi2}$. 
\end{lemma}

\begin{lemma}\label{lm:3}
There exists $\varepsilon_2>0$ such that $\xi_1>0$ holds for 
an arbitrary $\lambda_1\,(>1)$ and $\lambda_2\,(>1/\lambda)$ satisfying
$0<\lambda_1-1 <\varepsilon_2$. 
See the left-hand side of Figure~$\ref{fig:xi1-xi2}$. 
\end{lemma}

\begin{lemma}\label{lm:4}
We may choose $\varepsilon_3>1$ satisfying the following property{\rm :} 
for $\lambda_2 < \varepsilon_3/\lambda$, there exists 
$\varepsilon_4=\varepsilon_4(\lambda_2)>0$ such that $\xi_2>0$ holds for $\lambda_1 \,(<\lambda_2)$ satisfying  
$0< \lambda_1-1 <\varepsilon_4$. 
See the right-hand side of Figure~$\ref{fig:xi1-xi2}$. 
\end{lemma}

\begin{lemma}\label{lm:5}
We may choose an open interval $(a_1,a_2)\subset (0,1)$ satisfying the following 
property{\rm :} for $\lambda\in (a_1,a_2)$, there exists 
$(b_1,b_2) \subset (1,1/\lambda]$ such that $\xi_2>0$ holds on 
$\{(\lambda_1,\lambda_2) \>|\> \lambda_2\in  (b_1,b_2), 1<\lambda_1<\lambda_2\}$. 
See the right-hand side of Figure~$\ref{fig:xi1-xi2}$. 
\end{lemma}

\begin{lemma}\label{lm:6}
For an arbitrary $\lambda_1>1$, there exists $N=N(\lambda_1)>0$ such that $\xi_2<0$ holds for $\lambda_2 \,(> N)$. 
See the right-hand side of Figure~$\ref{fig:xi1-xi2}$. 
\end{lemma}

We recall the following fact. 

\begin{fact}[Poincar\'e-Miranda Theorem {\cite{ku}}]\label{th:PM}
Let $f_i:[0,1]\times [0,1]\to\mathbb{R}^2$ $(i=1,2)$ be a continuous function such that 
$f_1>0$ on $x_1=0$, $f_1<0$ on $x_1=1$, and $f_2>0$ on $x_2=0$, $f_2<0$ on $x_2=1$. 
Then there exists $(x_1,x_2)\in [0,1]\times[0,1]$ such that $f_1(x_1,x_2)=f_2(x_1,x_2)=0$.
\end{fact}

Now we are ready to prove Proposition~\ref{pr:period12}.
 
\begin{proof}[Proof of Proposition~\ref{pr:period12}]
Choose any $\lambda\in(a_1,a_2)$, where $a_1$ and $a_2$ are as in Lemma~\ref{lm:5}. 
We define curves $c_1,\dots,c_6$ in the $(\lambda_1,\lambda_2)$-plane as follows. 
\begin{enumerate}
\item $c_1:=\{(\lambda_1,\varepsilon_1(\lambda_1)-\epsilon)\;:\;\lambda_1>1\}$, where $\varepsilon_1$ is given in Lemma~\ref{lm:1}. 
\item $c_2:=\{(M+\epsilon,\lambda_2)\;:\;\lambda_2>M\}$, where $M$ is given in Lemma~\ref{lm:2}. 
\item $c_3:=\{(\lambda_1,1/\lambda+\epsilon)\;:\;1<\lambda_1\le 1+\varepsilon_2\}\cup\{(1+\varepsilon_2,\lambda_2)\;:\;\lambda_2>1/\lambda\}$, where $\varepsilon_2$ is given in Lemma~\ref{lm:3}. 
\item $c_4:=\{(1+\varepsilon_4(\lambda_2)-\epsilon,\lambda_2)\;:\;1<\lambda_2<\varepsilon_3/\lambda\}$, where $\varepsilon_3$ and $\varepsilon_4$ are given in Lemma~\ref{lm:4}. 
\item $c_5:=\{(\lambda_1,b_2-\epsilon)\;:\;1<\lambda_1<\lambda_2\}$, where $b_2$ is given in Lemma~\ref{lm:5}. 
\item $c_6:=\{(\lambda_1,N(\lambda_1)+\epsilon)\;:\;\lambda_1>1\}$, where $N$ is given in Lemma~\ref{lm:6}. 
\end{enumerate}
And label $P_{ij}$ the intersection of $c_i$ and $c_j$. 
Here $\epsilon>0$ very small positive is chosen so that $\xi_1$ is positive on $P_{34}P_{36}$ (Lemma~\ref{lm:3}) and negative on $P_{15}P_{26}$ (Lemmas~\ref{lm:2} and \ref{lm:3}) and additionally $\xi_2$ is positive on $P_{34}P_{15}$ (Lemmas~\ref{lm:4} and \ref{lm:5}) and negative on $P_{26}P_{36}$ (Lemma~\ref{lm:6}). 
See Figure~\ref{fig:pm}. 

\begin{figure}[htbp] 
\centering
 \includegraphics[height=3in]{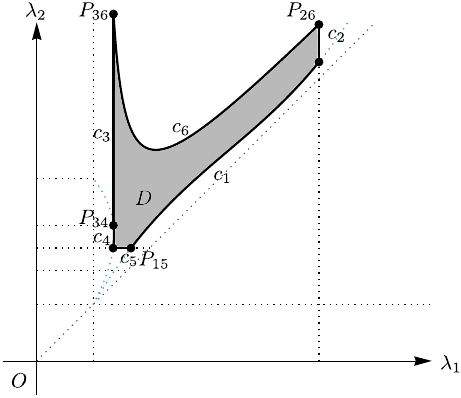} 
\caption{The domain $D$ in the $(\lambda_1,\lambda_2)$-plane.}
\label{fig:pm}
\end{figure} 

Let $D$ be the closed domain surrounded by the curves $c_1,\dots,c_6$.  
We compile our results for the sign of 
\[
(\xi_1,\xi_2)=\left(\int_{1}^{\lambda_1} \phi_1(t),\int_{\lambda_1}^{\lambda_2} \phi_2(t)\right)
\]
into $D$. 
Take a homeomorphism $\Phi$ on the unit square $[0,1]\times [0,1]$ into $D$ such that 
$(0,1)$, $(0,0)$, $(1,0)$, $(1,1)$ map to $P_{36}$, $P_{34}$, $P_{15}$, $P_{26}$, respectively.
We set $f_i:[0,1]\times [0,1]\to\mathbb{R}$ by $f_i=\xi_i\circ\Phi$ ($i=1,2$). 
Then by Fact~\ref{th:PM}, there exists $(x_1,x_2)\in [0,1]\times [0,1]$ such that 
$f_1(x_1,x_2)=f_2(x_1,x_2)=0$. 
Therefore, at $(\lambda_1,\lambda_2)=\Phi(x_1,x_2)$, we have 
\[
\xi_1(\lambda_1,\lambda_2)=
\xi_2(\lambda_1,\lambda_2)=0. 
\]
Thus the minimal surface exists for that $\lambda$, and because $\lambda$ was arbitrary, the minimal surface exists for all $\lambda\in(a_1,a_2)$.
\end{proof}

Summing up, we have proven Proposition \ref{pr:existence}.

\section{Embeddedness}
\label{embedded}

In this section we prove the following proposition. 

\begin{proposition}\label{pr:embedded}
For each $\lambda\in(a_1,a_2)$ and corresponding $\lambda_1$ and $\lambda_2$, the surface $M_{\lambda}$ given by Theorem \ref{thm1} is embedded.
\end{proposition}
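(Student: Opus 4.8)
The plan is to exploit the reflective symmetries of $M_\lambda$ to reduce embeddedness to a one-dimensional statement about horizontal slices. By Lemma~\ref{lm:sym}, the three involutions $\tau_1,\tau_2,\tau_3$ induce reflections of $X(R)$ across three mutually orthogonal planes: $\tau_1$ and $\tau_2$ across vertical planes $\{x_1=c_1\}$ and $\{x_2=c_2\}$, and $\tau_3$ across a horizontal plane, which after a vertical translation we take to be $P_0=\{x_3=0\}$. Since $\tau_3$ reverses $x_3$ and fixes its fixed-point curve, it suffices to prove that the closed upper piece $M^{+}=X(R)\cap\{x_3\ge 0\}$ is embedded and meets $P_0$ orthogonally along that fixed curve; the reflected lower piece then introduces no new self-intersections, and the translations $\bm{v}_1,\bm{v}_2$ tile $M_\lambda$ periodically. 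Using $\tau_1,\tau_2$ one reduces further to a fundamental piece bounded by planar symmetry curves, whose images I would first pin down via the conjugate surface of Remark~\ref{rm:conj}: each such curve is a planar geodesic, dual to a straight segment of $M_\lambda^{*}$, so its embeddedness and the orthogonality relations along $\partial M^{+}$ follow from the flat geometry of the conjugate picture.

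The core step reduces the two-dimensional embeddedness to embeddedness of level sets of the height. Since $\phi_3=d\!\left(\tfrac{1}{2a}\log\tfrac{z-a}{z+a}\right)$, the third coordinate is the single-valued harmonic function $x_3=\Re\tfrac{1}{2a}\log\tfrac{z-a}{z+a}$, whose level set $\{x_3=h\}$ is, for each $h$, an Apollonius circle $C_h$ with foci $\pm a$ in the $z$-plane, lifted to the two sheets of $\overline R$. As $h$ runs over $(0,\infty)$ these circles foliate the relevant region of $\overline R$, collapsing onto one of the ends $z=\pm a$ as $h\to\infty$ and onto the central curve $C_0=\{\Re z=0\}$ in $P_0$ as $h\to 0$. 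Because slices at distinct heights are automatically disjoint in $\R^3$, $M^{+}$ is embedded as soon as each slice $S_h=X(C_h)$ is an embedded curve in the horizontal plane $\{x_3=h\}$. Thus I would parametrize $S_h$ along $C_h$, use $\tau_1,\tau_2$ to restrict to a single arc, and prove that the horizontal coordinates $(x_1,x_2)$ trace out an embedded arc, most cleanly by establishing monotonicity of one coordinate (equivalently, that the horizontal tangent never doubles back) along the arc.

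The ends and the central level are handled by asymptotics. Near the Scherk ends $z=\pm a$, where $G=w\to\pm i$ so the normal is horizontal, $M_\lambda$ is asymptotic to flat vertical half-planes parallel to the $x_1x_3$-plane; hence for $h$ large each component of $S_h$ is $C^1$-close to a straight line, and the several components, being traces of distinct asymptotic planes within one period, are embedded and mutually disjoint. For $h$ near $0$, embeddedness of $S_h$ follows from the orthogonal meeting with $P_0$ established in the first step, together with continuity of the slices. It then remains only to interpolate across the intermediate range of $h$.

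The main obstacle will be this intermediate range: ruling out self-intersections and crossings of the slice $S_h$ precisely where the additional handle is attached, the feature that distinguishes $M_\lambda$ from Wei's embedded genus-two surface. I expect this to require quantitative control, either monotonicity estimates on $x_1$ or $x_2$ along $C_h$ extracted from the explicit Weierstrass integrands, or a degree and continuity argument anchored at the boundary symmetry curves, valid uniformly for all $\lambda\in(0,1)$ and the corresponding $\lambda_1,\lambda_2$ furnished by Proposition~\ref{pr:existence}. Particular care will be needed as $\lambda\to 0$ and $\lambda\to 1$, where $M_\lambda$ degenerates to the plane foliation and to two genus-one Scherk surfaces respectively, since the slice geometry becomes singular in these limits.
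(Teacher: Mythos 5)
There is a genuine gap, and you acknowledge it yourself: your entire argument funnels into the claim that each horizontal slice $S_h=X(C_h)$ is an embedded curve, and for the ``intermediate range'' of $h$ you offer only candidate strategies (``monotonicity estimates \dots or a degree and continuity argument'') rather than a proof. That intermediate range is not a technical loose end; it is the whole problem. Worse, the slicing picture as you set it up is not quite right: the level sets of $x_3$ are singular at the zeros of $dh$, i.e.\ at the eight real points $z=\pm\lambda,\pm1,\pm\lambda_1,\pm\lambda_2$ where $G\in\{0,\infty\}$ (vertical normal). Through each such saddle the level set of $x_3$ on $R$ is a crossing of arcs, not a circle, so the foliation does not consist of Apollonius circles collapsing monotonically from the ends to a central curve, and any continuity/degree argument must explain how embeddedness of the slices survives passage through these critical levels --- which is exactly where the added handle could create self-intersections. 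So the proposal is a plausible program, not a proof.

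The paper's actual argument avoids slicing entirely. It uses the symmetry planes to cut $M_\lambda$ into eight congruent pieces and takes the piece $\Omega_1=X(R_1)$ over the first quadrant of the $z$-plane, whose boundary consists of seven planar geodesics. It then passes to the conjugate piece $\Omega_1^*$, whose boundary, by Remark~\ref{rm:conj}, consists of straight segments parallel to the coordinate axes. After doubling $\Omega_1^*$ by a $180^\circ$ rotation about one boundary segment, the resulting surface $\Omega_2^*$ has the boundary configuration of a Jenkins--Serrin problem over a rectangle $D$ in the $(x_2,x_3)$-plane; the uniqueness statement in Fact~\ref{th:JS} forces $\Omega_2^*$ (hence $\Omega_1^*$) to be a graph over $D$, and then Krust's theorem (Fact~\ref{th:krust}) transfers the graph property back to $\Omega_1$, which is therefore embedded. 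Note where the real work happens: the only global input is the uniqueness of the Jenkins--Serrin solution, which replaces all of the quantitative slice control your plan would require. If you want to salvage your approach, you would need a genuinely new argument for injectivity of $X$ on the singular and near-singular level sets; as written, your proposal establishes embeddedness only near the ends and near the plane $P_0$, which is strictly weaker than the statement.
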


To prove this, we first recall two facts, which are central to the arguments in this section. 

\begin{fact}[Schwarz Reflection Principle {\cite[Theorem 1.5.1]{ka6}}]\label{th:reflection}
Suppose a minimal surface $M$ contains in its boundary a curve $C$ that is either a straight line or a nonstraight planar geodesic. 
Then $M$ can be extended smoothly across $C$ by respectively $180^\circ$-rotation about $C$ or reflection through the plane containing $C$. 
\end{fact}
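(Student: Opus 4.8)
The plan is to prove this as a local statement, using a conformal (isothermal) parametrization together with the classical Schwarz reflection principle for harmonic functions. Since both minimality and the asserted symmetry are local along $C$, I would first pass to a conformal parametrization $X=(X_1,X_2,X_3)\colon\Omega^+\to\R^3$ on an upper half–disc $\Omega^+=\{z=u+iv:|z|<r,\ v\ge 0\}$, arranged so that the boundary arc $C$ corresponds to the real segment $I=\{v=0\}$ and $X$ extends continuously, indeed $C^1$, up to $I$. In such coordinates each $X_j$ is harmonic, and the Weierstrass one-forms $\phi_j=2\,\partial_z X_j\,dz$ are holomorphic on $\Omega^+$ with $\sum_j\phi_j^2\equiv 0$ (conformality plus minimality) and $\sum_j|\phi_j|^2>0$ (regularity of the immersion).

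Next I would normalize by a rigid motion of $\R^3$ and read off the boundary conditions along $I$. In the planar–geodesic case I place the plane $P$ as $\{x_3=0\}$; because a nonstraight planar geodesic has its principal normal lying in $P$ and, being a geodesic, parallel to the surface normal, the surface normal lies in $P$ along $C$, so $M$ meets $P$ orthogonally there. Combining this with the conformality relations $X_u\cdot X_v=0$ and $|X_u|=|X_v|$ forces the Dirichlet condition $X_3=0$ and the Neumann conditions $\partial_v X_1=\partial_v X_2=0$ on $I$. In the straight–line case I place $C$ along the $x_1$–axis, giving Dirichlet conditions $X_2=X_3=0$ and, again from $X_u\perp X_v$, the Neumann condition $\partial_v X_1=0$ on $I$.

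I would then extend each coordinate function across $I$ by Schwarz reflection for harmonic functions: odd reflection $X_j(u,-v)=-X_j(u,v)$ for the Dirichlet components and even reflection $X_j(u,-v)=X_j(u,v)$ for the Neumann components. Componentwise this yields a harmonic map $\hat X$ on the full disc whose restriction to the lower half is exactly the geometric reflection of the original surface — reflection through $P$ in the first case and $180^\circ$ rotation about the line in the second. Equivalently, each $\phi_j$ continues holomorphically across $I$ by holomorphic Schwarz reflection, so the extended data stay holomorphic; by the identity theorem the relations $\sum_j\phi_j^2\equiv 0$ and $\sum_j|\phi_j|^2>0$ (the latter checked on $I$ from regularity there) persist on the whole disc. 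Hence $\hat X$ is again a conformal minimal immersion, and $M$ continues real-analytically — in particular smoothly — across $C$ as the stated reflection or rotation.

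The main obstacle is establishing the precise boundary conditions: one must show that the two geometric hypotheses translate \emph{exactly} into the mixed Dirichlet/Neumann data required for harmonic reflection. This rests on carefully pairing the conformality identities with the normalized placement of $C$ (and, in the planar case, on the geodesic-implies-orthogonal-intersection step), and on having enough boundary regularity of $X$ up to $I$ to legitimize both the harmonic reflection and the real-analyticity of the resulting extension.
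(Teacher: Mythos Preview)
Your proof sketch is correct and follows the standard approach to the Schwarz Reflection Principle for minimal surfaces: pass to isothermal coordinates so the coordinate functions are harmonic, normalize by a rigid motion, translate the geometric hypothesis into mixed Dirichlet/Neumann boundary data on the real axis, and then apply the classical Schwarz reflection for harmonic (equivalently, for the holomorphic Weierstrass data) functions. The identification of the boundary conditions in both the straight-line case and the planar-geodesic case is done correctly, and your remark that the conformality relation $\sum\phi_j^2=0$ and the regularity condition $\sum|\phi_j|^2>0$ propagate across $I$ by the identity theorem is the right way to close the argument.

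However, there is nothing to compare against: in the paper this statement is not proved at all. It is recorded as a \emph{Fact} with a citation to Karcher's survey \cite[Theorem~1.5.1]{ka6}, and the paper simply invokes it in the embeddedness section. So you have supplied a genuine proof outline where the paper gives none; your argument is essentially the one found in the cited reference.
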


\begin{fact}[Krust {\cite[Theorem 2.4.1]{ka6}}, see also {\cite[Theorem 3.1]{dor}}]\label{th:krust}
If an embedded minimal surface $X:\D\to\R^3$, $\D=\{z\in\C:|z|<1\}$ can be written as a graph over a convex domain in a plane, then the conjugate surface $X^*:\D\to\R^3$ is also a graph over a domain in the same plane. 
\end{fact}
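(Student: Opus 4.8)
The final statement is Krust's theorem, and I would prove it by converting the graph/conjugate-surface assertion into a statement about planar harmonic mappings and then feeding in the convexity hypothesis through the Clunie--Sheil-Small shear theory.

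\textbf{Reduction to harmonic maps.} After a rigid motion I may assume the plane is the $x_1x_2$-plane, with $\pi\colon\R^3\to\R^2$ the orthogonal projection onto it. Writing $\Phi_j(z)=\int_{z_0}^{z}\phi_j$ for the holomorphic integrands and setting $h=\tfrac12(\Phi_1+i\Phi_2)$ and $k=\tfrac12(\Phi_1-i\Phi_2)$, a direct computation gives
\[
\pi\circ X=h+\bar k,\qquad \pi\circ X^{*}=i(h-\bar k),
\]
since passing to the conjugate surface replaces each $\phi_j$ by $i\phi_j$, so that $X^{*}_j=-\Im\Phi_j$. Both are planar harmonic maps with common dilatation $\omega=k'/h'=-G^{-2}$ (using $\phi_1\pm i\phi_2=\mp G^{\pm1}\,dh$), whence $|\omega|=|G|^{-2}$. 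The graph hypothesis keeps the unit normal off the equator, i.e.\ $|G|\neq1$ on $\D$; replacing $h+\bar k$ by its complex conjugate if necessary, I may assume $|\omega|<1$, so that $\pi\circ X$ and $\pi\circ X^{*}$ are orientation-preserving local diffeomorphisms. As $\pi\circ X$ is given to be univalent onto the convex domain $\Omega$, and $\pi\circ X^{*}$ is just $F:=h-\bar k$ followed by the rotation $w\mapsto iw$, it remains only to prove global injectivity of $F$.

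\textbf{Injectivity via a unimodular shear.} Suppose $F(z_1)=F(z_2)$ with $z_1\neq z_2$, and set $a=h(z_1)-h(z_2)$ and $b=k(z_1)-k(z_2)$. Then $F(z_1)=F(z_2)$ reads $a=\bar b$, while the difference of $\pi\circ X=h+\bar k$ at the two points equals $a+\bar b=2a$. If $b=0$ then $a=0$, so $\pi\circ X$ would identify $z_1$ and $z_2$, contradicting univalence; hence $b\neq0$ and $\epsilon:=-\bar b/b$ is unimodular. With this choice the analytic function $h+\epsilon k$ satisfies $(h+\epsilon k)(z_1)-(h+\epsilon k)(z_2)=a+\epsilon b=\bar b-\bar b=0$, so $h+\epsilon k$ is not injective. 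This is the sought contradiction, provided one knows that $h+\epsilon k$ is univalent for every $\epsilon$ with $|\epsilon|\le1$.

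\textbf{The main obstacle.} The only non-formal ingredient, and the precise point where convexity of $\Omega$ is used, is exactly that last claim: if $h+\bar k$ is a univalent harmonic mapping of $\D$ onto a convex domain, then the analytic family $\{h+\epsilon k\}_{|\epsilon|\le1}$ consists of univalent functions. I would supply this through the Clunie--Sheil-Small shear theory: convexity of the image is equivalent to convexity in every direction, and after the rotation normalizing a direction this yields, via Kaplan's close-to-convexity criterion, that each shear $h+\epsilon k$ is close-to-convex and hence univalent. This is the heart of the argument rather than the algebra above, and the hypothesis is genuinely indispensable here, since Krust's conclusion is false for univalent harmonic maps onto non-convex domains.
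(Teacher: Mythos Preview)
The paper does not give its own proof of this statement: it is listed as a \emph{Fact} and attributed to Krust via Karcher's notes and to Dorff, so there is no in-paper argument to compare your proposal against.

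That said, your outline is correct. The identification $\pi\circ X=h+\bar k$, $\pi\circ X^{*}=i(h-\bar k)$ and the dilatation computation are right, and the unimodular-shear contradiction is the standard clean way to reduce the question to the univalence of every analytic shear $h+\epsilon k$ with $|\epsilon|\le 1$. As you correctly flag, the only substantive step is the Clunie--Sheil-Small theorem (harmonic univalent onto a convex domain $\Rightarrow$ all such shears are close-to-convex, hence univalent), and that is exactly where the convexity hypothesis is spent. This is in fact the route taken in the Dorff reference the paper cites, so your approach coincides with the source the authors are pointing to rather than differing from it.
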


%
We also need the following lemma.

\begin{lemma}\label{lm:ReImw}
Let $R_1$ be the portion of the domain $R$ restricted to the first quadrant of the $z$-plane with $w(\infty)=+1$. 
Denote by $R_1^\circ$ the interior of $R_1$. 
For any $z\in R_1^\circ$, we have
\[
\Re(w)>0>\Im(w). 
\]
\end{lemma}

\begin{proof}
In the proof, we always consider $\arg z$ as a single valued function that takes value 
in the interval $(-\pi,\pi]$. 
We first show that the imaginary part of 
\[
w^2=\frac{g_1(z)}{g_2(z)}
   =\frac{(z-\lambda)(z+1)(z-\lambda_1)(z+\lambda_2)}{(z+\lambda)(z-1)(z+\lambda_1)(z-\lambda_2)}
\]
is negative for $z\in R_1^\circ$. 
The map $\mathbb{C}\cup\{\infty\}\ni z\mapsto g_1(z)/g_2(z)\in\mathbb{C}\cup\{\infty\}$ is degree 4 
and the restriction $\mathbb{R}\cup\{\infty\}\ni x\mapsto g_1(x)/g_2(x)\in\mathbb{R}\cup\{\infty\}$ 
is also four to one map. 
Thus $w^2\in\mathbb{R}\cup\{\infty\}$ if and only if $z\in\mathbb{R}\cup\{\infty\}$. 
Hence, by continuity, $\Im(w^2)$ is either positive or negative for all $z\in R_1^\circ$. 
Let $z_0=r+i\epsilon\in R_1^\circ$, where $r$ is sufficiently large positive number and 
$\epsilon$ is sufficiently small positive number such that both $g_1(z_0)$ and $g_2(z_0)$ take 
values in $R_1^\circ$. 
Then there exist $\theta_1,\theta_2\in (0,\pi/2)$ so that 
\[
\arg g_i(z_0) = \theta_i\qquad (i=1,2). 
\]
Then, since 
\begin{align*}
0&<\arg (z_0+\lambda_2)
  <\arg (z_0+\lambda_1)
  <\arg (z_0+1)
  <\arg (z_0+\lambda) \\
 &<\arg (z_0-\lambda)
  <\arg (z_0-1)
  <\arg (z_0-\lambda_1)
  <\arg (z_0-\lambda_2)
  <\frac{\pi}{2}, 
\end{align*}
we have 
\begin{align*}
\theta_1
&=\arg (z_0-\lambda)+\arg (z_0+1)+\arg (z_0-\lambda_1)+(z_0+\lambda_2) \\
&<\arg (z_0-1)+\arg (z_0+\lambda)+\arg (z_0-\lambda_2)+(z_0+\lambda_1)
 =\theta_2,  
\end{align*}
and hence $\arg w^2=\theta_1-\theta_2<0$. 
This implies that $\Im(w^2)<0$. 
Therefore, $w$ satisfies either
\[
\Re(w)<0<\Im(w)
\qquad\mbox{or}\qquad
\Re(w)>0>\Im(w). 
\]
Since $w(\infty)=+1$, we see that $\Re(w)>0>\Im(w)$. 
\end{proof}
 
\begin{proof}[Proof of Proposition~\ref{pr:embedded}]
The surface $M_{\lambda}$ has symmetry planes parallel to the three coordinate planes.  
Thus, $M_{\lambda}$ is made up of eight congruent pieces.  
Hence, we only need to show that one of these eight pieces is embedded.  
Let $R_1$ be the portion of $R$ as in Lemma~\ref{lm:ReImw}. 
We set $\Omega_1:=X(R_1)$. 
See Figure~\ref{fp-fpc}, left. 

\begin{figure}[htbp] 
\centering
 \includegraphics[width=2.2in]{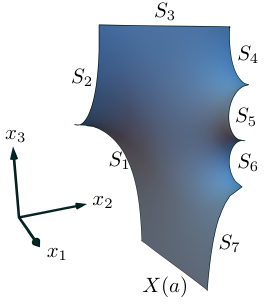} \ \ 
 \includegraphics[width=2.8in]{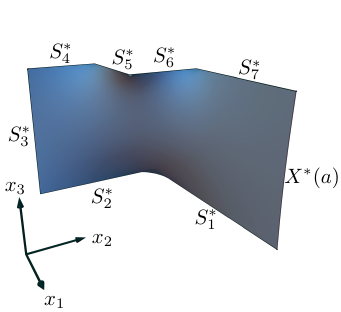} 
\caption{Fundamental piece $\Omega_1$ (left) and its conjugate $\Omega_1^*$ (right).}
\label{fp-fpc}
\end{figure} 

We set $\Pi=\{(x_1,x_2,x_3)\in\mathbb{R}^3\;:\;x_1=x_2\}$. 
We will show that $\Omega_1$ is a graph over a domain in $\Pi$.
By Lemma~\ref{lm:sym} and Fact~\ref{th:reflection},
the boundary of $\Omega_1$ consists of seven planar symmetry curves:
\begin{enumerate}
\item
$s_1$ is the real interval $[\lambda,a)$ and $S_1:=X(s_1)$ lies in a plane parallel to the $(x_2,x_3)$-plane;
\item
$s_2$ is the real interval $[0,\lambda]$ and $S_2:=X(s_2)$ lies in a plane parallel to the $(x_1,x_3)$-plane;
\item
$s_3$ is the positive imaginary axis and $S_3:=X(s_3)$ lies in a plane parallel to the $(x_1,x_2)$-plane;
\item
$s_4$ is the real interval $[\lambda_2,\infty)$ and $S_4:=X(s_4)$ lies in a plane parallel to the $(x_1,x_3)$-plane different than the plane containing $S_2$;
\item
$s_5$ is the real interval $[\lambda_1,\lambda_2]$ and $S_5:=X(s_5)$ lies in a plane parallel to the $(x_2,x_3)$-plane different than the plane containing $S_1$;
\item
$s_6$ is the real interval $[1,\lambda_1]$ and $S_6:=X(s_6)$ lies in plane containing $S_4$;
\item
$s_7$ is the real interval $(a,1]$ and $S_7:=X(s_7)$ lies in plane containing $S_5$.
\end{enumerate}

Take the conjugate $\Omega_1^*:=X^*(R_1)$ of $\Omega_1$. 
See Figure~\ref{fp-fpc}, right.
By Fact~\ref{th:krust}, it suffices to prove that $\Omega_1^*$ is a graph over a domain in $\Pi$.
By Remark~\ref{rm:conj}, 
the boundary of $\Omega_1^*$ consists of seven straight lines:
\begin{enumerate}
\item
$S_1^*:=X^*(s_1)$ is a straight line parallel to the $x_1$-axis;
\item
$S_2^*:=X^*(s_2)$ is a straight segment parallel to the $x_2$-axis;
\item
$S_3^*:=X^*(s_3)$ is a straight segment parallel to the $x_3$-axis;
\item
$S_4^*:=X^*(s_4)$ is a straight segment parallel to the $x_2$-axis;
\item
$S_5^*:=X^*(s_5)$ is a straight segment parallel to the $x_1$-axis;
\item
$S_6^*:=X^*(s_6)$ is a straight segment parallel to the $x_2$-axis;
\item
$S_7^*:=X^*(s_7)$ is a straight line parallel to the $x_1$-axis. 
\end{enumerate}

It is easy to verify that the projection of the boundary of $\Omega_1^*$ onto $\Pi$ is embedded.  
By Lemma~\ref{lm:ReImw}, $\Re(w)>0>\Im(w)$ in the interior of $R_1$.  
Thus the projection of $\Omega_1^*$ to $\Pi$ is a submersion.  
Hence, $\Omega_1^*$ is a graph over $\Pi$, proving that $M_{\lambda}$ is embedded.
\end{proof}

\appendix

\section{Proofs of Lemmas \ref{lm:1}--\ref{lm:6}}

Throughout this appendix, we refer to formulae in \cite{bf}.  
For example, the sentence ``\textbf{[400.00]}'' stands for ``the formula \textbf{[400.00]} in \cite{bf}''. 

In advance, we introduce the elliptic integrals of the first kind, the second kind, and the third kind: 
\begin{align*}
& F (\varphi, k) =\int_0^{\varphi} \frac{d\theta}{\sqrt{1-k^2\sin^2\theta}}, \quad 
E(\varphi, k) = \int_0^{\varphi} \sqrt{1-k^2\sin^2\theta}d\theta, \\
& \Pi (\varphi, \alpha^2,k) = \int_0^{\varphi} \frac{d\theta}{(1-\alpha^2\sin^2\theta)\sqrt{1-k^2\sin^2\theta}}. 
\end{align*}
In particular, for $\varphi=\pi/2$, we call them the complete elliptic integrals of the first kind, 
the second kind, and the third kind, denoted by 
$K (k)$, $E (k)$, $\Pi (\alpha^2,k)$.

\subsection{Estimate for $\xi_1$}

Setting $s=t^2$, we have  
\[
\xi_1 
= \frac{1-\lambda-\lambda_1+\lambda_2}{2} \int_1^{\lambda_1^2} \frac{s+\frac{\lambda\lambda_1-\lambda\lambda_2-\lambda_1\lambda_2+\lambda\lambda_1\lambda_2}{1-\lambda-\lambda_1+\lambda_2} }{(s-a^2)\sqrt{(s-\lambda^2)(s-1)(\lambda_1^2-s)(\lambda_2^2-s)}}ds. 
\]

By  \textbf{[254.40]}, we have
\[
\xi_1 = \frac{(1-\lambda)(\lambda_1-1)(\lambda_2+1)}{(a^2-1)\sqrt{(\lambda_2^2-1)(\lambda_1^2-\lambda^2)}}
\int_0^{K(k)} \frac{1-\alpha_2^2 {\rm sn}^2 u}{1-\alpha_3^2 {\rm sn}^2 u} du, 
\]
where 
\[
\alpha_2^2 = \frac{(\lambda_2-\lambda)(\lambda_1+1)}{(\lambda_1-\lambda)(\lambda_2+1)}>1, \qquad
\alpha_3^2 = \frac{(\lambda_1^2-1)(a^2-\lambda^2)}{(\lambda_1^2-\lambda^2)(a^2-1)}<0, \qquad 
k^2 = \frac{(\lambda_1^2-1)(\lambda_2^2-\lambda^2)}{(\lambda_2^2-1)(\lambda_1^2-\lambda^2)}. 
\]
Note that 
\[
(\lambda_1^2-1)(\lambda_2^2-\lambda^2)-(\lambda_2^2-1)(\lambda_1^2-\lambda^2)
= -(1-\lambda^2)(\lambda_2^2-\lambda_1^2)<0
\]
implies $0<k<1$. It follows from \textbf{[340.01]} that  
\[
\int_0^{K(k)} \frac{1-\alpha_2^2 {\rm sn}^2 u}{1-\alpha_3^2 {\rm sn}^2 u} du 
= \frac{1}{\alpha_3^2} [ (\alpha_3^2-\alpha_2^2) \Pi (\alpha_3^2,k)+\alpha_2^2 K (k)]. 
\]
Hence, we have 
\begin{align*}
\xi_1 = \frac{(1-\lambda)(\lambda_1+\lambda)}{(\lambda_1+1)(a^2-\lambda^2)}
\sqrt{\frac{(\lambda_2+1)(\lambda_1-\lambda)}{(\lambda_2-1)(\lambda_1+\lambda)}}
[ (\alpha_3^2-\alpha_2^2) \Pi (\alpha_3^2,k)+\alpha_2^2 K (k)]. 
\end{align*}
Since $\alpha_3^2<0$ holds, it corresponds to Case~I in \textbf{[400.01]}. 

By means of \textbf{[410.01]} and Heuman's Lambda function $\Lambda_0 (\psi,k)$, we find 
\begin{equation}\label{period_trans_2}
\xi_1 
=\frac{(1-\lambda)(\lambda_1+\lambda)}{(\lambda_1+1)(a^2-\lambda^2)}
\sqrt{\frac{(\lambda_2+1)(\lambda_1-\lambda)}{(\lambda_2-1)(\lambda_1+\lambda)}} 
\left[ \frac{\alpha_3^2(k^2-\alpha_2^2)}{k^2-\alpha_3^2} K(k) -\frac{\pi\alpha_3^2(\alpha_3^2-\alpha_2^2)\Lambda_0 (\psi,k)}{2\sqrt{(-\alpha_3^2) (1-\alpha_3^2)(k^2-\alpha_3^2)}}\right],  
\end{equation}
where $\sin\psi = \sqrt{\alpha_3^2/(\alpha_3^2-k^2)}$. 

\subsubsection{Proof of Lemma~\ref{lm:1}} 

For an arbitrary fixed $\lambda_1>1$, we consider the sign of the large square brackets in \eqref{period_trans_2} as $\lambda_2\to \lambda_1$. 
Remark that 
\[
a^2-1 = \frac{\alpha-2-\sqrt{\alpha^2-4\lambda\lambda_1\lambda_2}}{2}
= \frac{2(\lambda-1)(\lambda_1-1)(\lambda_2+1)}{\alpha-2+\sqrt{\alpha^2-4\lambda\lambda_1\lambda_2}}. 
\]
In this case, we have $k\to 1$ and $a^2\to \lambda$. 
Straightforward calculations yield 
\begin{align*}
k^2-\alpha_2^2 &= -\frac{(1+\lambda)(\lambda_1+1)(\lambda_2-\lambda)(\lambda_2-\lambda_1)}{(\lambda_1^2-\lambda^2)(\lambda_2^2-1)}, \\
k^2-\alpha_3^2 &= \frac{(1-\lambda^2)(\lambda_1^2-1)(\lambda_2^2-a^2)}{(1-a^2)(\lambda_1^2-\lambda^2)(\lambda_2^2-1)}, \quad 
1-k^2 = \frac{(1-\lambda^2)(\lambda_2^2-\lambda_1^2)}{(\lambda_1^2-\lambda^2)(\lambda_2^2-1)}. 
\end{align*}
By virtue of \textbf{[900.05]}, we find 
\[
\lim_{k\to 1} \frac{K(k)}{-\log \sqrt{1-k^2}}=1. 
\]
As a consequence, we have 
\begin{align*}
\frac{\alpha_3^2(k^2-\alpha_2^2)}{k^2-\alpha_3^2} K(k) 
& = \frac{(a^2-\lambda^2)(\lambda_1+1)(\lambda_2-\lambda)\sqrt{(\lambda_2^2-1)(\lambda_2-\lambda_1)}}{(1-\lambda)(\lambda_2^2-a^2)\sqrt{(\lambda_1^2-\lambda^2)(\lambda_2+\lambda_1)}}\cdot 
\sqrt{\frac{\lambda_2^2-\lambda_1^2}{(\lambda_1^2-\lambda^2)(\lambda_2^2-1)}} \\
&\qquad \qquad \qquad \qquad \times \log \sqrt{\frac{(1-\lambda^2)(\lambda_2^2-\lambda_1^2)}{(\lambda_1^2-\lambda^2)(\lambda_2^2-1)}}\cdot \frac{K(k)}{\log \sqrt{1-k^2}}
\to 0
\end{align*}
as $\lambda_2\to\lambda_1$.
Moreover, we find 
\begin{align*}
\frac{\pi\alpha_3^2(\alpha_3^2-\alpha_2^2)\Lambda_0 (\psi,k)}{2\sqrt{(-\alpha_3^2) (1-\alpha_3^2)(k^2-\alpha_3^2)}} 
&  =\frac{\pi}{2(1-\lambda^2)} \sqrt{\frac{(1-a^2)(a^2-\lambda^2)(\lambda_1^2-\lambda^2)(\lambda_2^2-1)}{(\lambda_1^2-a^2)(\lambda_2^2-a^2)}} \\
&\qquad \qquad \qquad \times \left[ \frac{(\lambda_1^2-1)(a^2-\lambda^2)}{(\lambda_1^2-\lambda^2)(1-a^2)}+\frac{(\lambda_2-\lambda)(\lambda_1+1)}{(\lambda_1-\lambda)(\lambda_2+1)}  \right] \Lambda_0 (\psi,k). 
\end{align*}
It follows from \textbf{[151.01]} that $\Lambda_0 (\psi,1) = 2\psi/\pi$, and thus the above is positive as 
$\lambda_2\to\lambda_1$, which completes the proof. 

\subsubsection{Proof of Lemma~\ref{lm:2}} 

We consider the sign of the large square brackets in \eqref{period_trans_2} as $\lambda_1 \to \infty$. 
In this case, the inequality $\lambda_2>\lambda_1$ implies that $\lambda_2\to \infty$ and $k\to 1$. 
Note that 
\begin{align*}
& \frac{\alpha}{\lambda_1 \lambda_2} = 1+\frac{\lambda+\lambda_1-\lambda_2-\lambda\lambda_1+\lambda\lambda_2}{\lambda_1 \lambda_2} \to 1, \\
& a^2 = \frac{2 \lambda\lambda_1\lambda_2}{\alpha+\sqrt{\alpha^2-4\lambda\lambda_1\lambda_2}} 
= \frac{2 \lambda}{\frac{\alpha}{\lambda_1\lambda_2}+\sqrt{\frac{\alpha^2}{\lambda_1^2\lambda_2^2}-\frac{4\lambda}{\lambda_1\lambda_2}}} 
\to \lambda. 
\end{align*}
In general, we have to consider the orders of $\lambda_1$ and $\lambda_2$ 
as $\lambda_1, \lambda_2\to \infty$. 
In particular, for the following estimates, either $1-\lambda_1^2/\lambda_2^2 \to 0$ or $1-\lambda_1^2/\lambda_2^2$ can be estimated by a finite positive value ($<1$). 
We see that 
\begin{align*}
\frac{\alpha_3^2(k^2-\alpha_2^2)}{k^2-\alpha_3^2} K(k) 
& = \frac{(a^2-\lambda^2)(1+\frac{1}{\lambda_1})(1-\frac{\lambda}{\lambda_2})\sqrt{(1-\frac{1}{\lambda_2^2})(1-\frac{\lambda_1}{\lambda_2})}}{(1-\lambda)(1-\frac{a^2}{\lambda_2^2})\sqrt{(1-\frac{\lambda^2}{\lambda_1^2})(1+\frac{\lambda_1}{\lambda_2})}}\cdot 
\left[\frac{1-\frac{\lambda_1^2}{\lambda_2^2}}{(\lambda_1^2-\lambda^2)(1-\frac{1}{\lambda_2^2})}\right]^{1/2} \\
&\qquad \qquad \qquad \qquad \times \log \left[\frac{(1-\lambda^2)(1-\frac{\lambda_1^2}{\lambda_2^2})}{(\lambda_1^2-\lambda^2)(1-\frac{1}{\lambda_2^2})}\right]^{1/2}\cdot \frac{K(k)}{\log \sqrt{1-k^2}} 
\to 0 
\end{align*}
as $\lambda_1\to\infty$.
Moreover, we find 
\begin{align*}
\frac{\pi\alpha_3^2(\alpha_3^2-\alpha_2^2)\Lambda_0 (\psi,k)}{2\sqrt{(-\alpha_3^2) (1-\alpha_3^2)(k^2-\alpha_3^2)}}
& =\frac{\pi}{2(1-\lambda^2)} \left[\frac{(1-a^2)(a^2-\lambda^2)(1-\frac{\lambda^2}{\lambda_1^2})(1-\frac{1}{\lambda_2^2})}{(1-\frac{a^2}{\lambda_1^2})(1-\frac{a^2}{\lambda_2^2})}\right]^{1/2} \\
& \qquad \qquad \times \left[ \frac{(1-\frac{1}{\lambda_1^2})(a^2-\lambda^2)}{(1-\frac{\lambda^2}{\lambda_1^2})(1-a^2)}+\frac{(1-\frac{\lambda}{\lambda_2})(1+\frac{1}{\lambda_1})}{(1-\frac{\lambda}{\lambda_1})(1+\frac{1}{\lambda_2})}  \right] \Lambda_0 (\psi,k). 
\end{align*}
It is easy to verify that the limit of the above as $\lambda_1\to\infty$ is positive. 
Therefore, the lemma follows.  

\subsubsection{Proof of Lemma~\ref{lm:3}} 

For an arbitrary fixed $\lambda_2(>1/\lambda)$, we consider the sign of the large square brackets in \eqref{period_trans_2} as $\lambda_1 \to 1$. In this case, $k\to 0$ holds. 
Remark that $\alpha\to 1+\lambda\lambda_2$, and 
\begin{align*}
 a^2 &\to \frac{1+\lambda\lambda_2-|\lambda\lambda_2-1|}{2}
=1, \\
 \frac{\lambda_1^2-1}{1-a^2} &\to \frac{\lambda\lambda_2-1+|\lambda\lambda_2-1|}{(1-\lambda)(\lambda_2+1)}
=\frac{2(\lambda\lambda_2-1)}{(1-\lambda)(\lambda_2+1)}, \\
 \frac{1-a^2}{\lambda_1^2-a^2} 
& = \frac{2(1-\lambda)(\lambda_2+1)}{[\alpha-2+\sqrt{\alpha^2-4\lambda\lambda_1\lambda_2}]\bigg[\lambda_1+1+\frac{2(1-\lambda)(\lambda_2+1)}{\alpha-2+\sqrt{\alpha^2-4\lambda\lambda_1\lambda_2}}\bigg] } \\
& \to \frac{(1-\lambda)(\lambda_2+1)}{[\lambda\lambda_2-1+|\lambda\lambda_2-1|]
\bigg[1+\frac{(1-\lambda)(\lambda_2+1)}{\lambda\lambda_2-1+|\lambda\lambda_2-1|}\bigg]}
=\frac{(1-\lambda)(\lambda_2+1)}{(1+\lambda)(\lambda_2-1)}.
\end{align*}
By means of \textbf{[111.02]}, \textbf{[151.01]}, 
the large square brackets in \eqref{period_trans_2} as $\lambda_1 \to 1$ become  
\begin{align*}
& \frac{(a^2-\lambda^2)(\lambda_1+1)(\lambda_2-\lambda)(\lambda_2-\lambda_1)}{(1-\lambda)(\lambda_1^2-\lambda^2)(\lambda_2^2-a^2)} K (k) 
-\frac{\pi}{2(1-\lambda^2)} \sqrt{\frac{(1-a^2)(a^2-\lambda^2)(\lambda_1^2-\lambda^2)(\lambda_2^2-1)}{(\lambda_1^2-a^2)(\lambda_2^2-a^2)}} \\
& \hspace{210pt} \times 
\left[ \frac{(\lambda_1^2-1)(a^2-\lambda^2)}{(\lambda_1^2-\lambda^2)(1-a^2)}+\frac{(\lambda_2-\lambda)(\lambda_1+1)}{(\lambda_1-\lambda)(\lambda_2+1)}  \right] \Lambda_0 (\psi,k) \\
&\to 
\frac{\pi (a^2-\lambda^2)(\lambda_2-1)}{(1-\lambda)(1-\lambda^2)(\lambda_2^2-a^2)} 
\left[ \lambda_2-\lambda
-\frac{\lambda_2+1}{2} \sqrt{\frac{1-a^2}{\lambda_1^2-a^2}} 
\left(
\frac{(a^2-\lambda^2)(\lambda_1^2-1)}{(1+\lambda)(1-a^2)} + \frac{2(\lambda_2-\lambda)}{\lambda_2+1} 
\right)
\right] \\
& 
=\frac{\pi}{(1-\lambda)(\lambda_2+1)} (\lambda_2-\lambda-\sqrt{(1-\lambda^2)(\lambda_2^2-1)}).  
\end{align*}
Since 
\[
(\lambda_2-\lambda)^2-(1-\lambda^2)(\lambda_2^2-1) = (\lambda\lambda_2-1)^2 \geq 0
\]
holds, the above is positive provided that $\lambda\lambda_2>1$. 
Note that, since $\lambda\lambda_2>1$, we have $\lambda_2 \nrightarrow 1$. 
As a result, we may choose $\lambda_1$ which does not depend on $\lambda_2$. 
Therefore, we obtain the lemma. 

\subsection{Estimate for $\xi_2$ from below}

Setting $s=t^2$, we have 
\[
\xi_2 
= -\frac{1}{2}
\int_{\lambda_1^2}^{b^2} \frac{b^2-s}{\sqrt{s(s-\lambda^2)(s-1)(s-\lambda_1^2)(\lambda_2^2-s)}}ds  
 + \frac{1}{2}\int_{b^2}^{\lambda_2^2} \frac{s-b^2}{\sqrt{s(s-\lambda^2)(s-1)(s-\lambda_1^2)(\lambda_2^2-s)}} ds. 
\]

\subsubsection{Proof of Lemma~\ref{lm:4}} 

For an arbitrary fixed $\lambda_2\in (1,1/\lambda]$, we consider the case $\lambda_1\to 1$. 
We first have 
\[
\int_{\lambda_1^2}^{b^2} \frac{b^2-s}{\sqrt{s(s-\lambda^2)(s-1)(s-\lambda_1^2)(\lambda_2^2-s)}}ds\\
 \leq 
\frac{\lambda_1}{\sqrt{\lambda_1^2-\lambda^2}}
\int_{\lambda_1^2}^{b^2} \frac{b^2- s}{s\sqrt{(s-1)(s-\lambda_1^2)(\lambda_2^2-s)}} ds. 
\]
By \textbf{[235.18]} and \textbf{[340.01]}, we have
\[
\int_{\lambda_1^2}^{b^2} \frac{b^2- s}{s\sqrt{(s-1)(s-\lambda_1^2)(\lambda_2^2-s)}} ds 
 = \frac{2}{\lambda_1^2 \sqrt{\lambda_2^2-1}} \left[ \lambda_1^2 (b^2-1) F (\varphi,k) 
- b^2 (\lambda_1^2-1) \Pi (\varphi,\alpha^2,k) \right], 
\]
where 
\[
k^2 = \frac{\lambda_2^2-\lambda_1^2}{\lambda_2^2-1}, \qquad
\sin \varphi = \sqrt{\frac{(\lambda_2^2-1)(b^2-\lambda_1^2)}{(\lambda_2^2-\lambda_1^2)(b^2-1)}}=\frac{1}{k} \sqrt{\frac{b^2-\lambda_1^2}{b^2-1}}, \qquad
\alpha^2 = \frac{ \lambda_2^2-\lambda_1^2 }{\lambda_1^2 (\lambda_2^2 -1)}
=\frac{k^2}{\lambda_1^2}<k^2. 
\]
Thus, we see that  
\begin{align}
\nonumber 0 & \leq \frac{1}{2}\int_{\lambda_1^2}^{b^2} \frac{b^2-s}{\sqrt{s(s-\lambda^2)(s-1)(s-\lambda_1^2)(\lambda_2^2-s)}}ds \\
& \leq 
\frac{\lambda_1^2 (b^2-1) F (\varphi,k) 
- b^2 (\lambda_1^2-1) \Pi (\varphi,\alpha^2,k) }{\lambda_1 \sqrt{(\lambda_2^2-1)(\lambda_1^2-\lambda^2)}}
\leq 
\frac{\lambda_1 (b^2-1) F (\varphi,k)}{\sqrt{(\lambda_2^2-1)(\lambda_1^2-\lambda^2)}}. \label{key_estimates_1}
\end{align}
We shall show that, for $\lambda_2 \,(\leq 1/\lambda)$, the right-hand side converges to $0$ 
as $\lambda_1\to 1$. 
In this case, $k\to 1$ holds. In order to do, we investigate the limit of $b^2-1$. 
For $\lambda\lambda_2=1$, we have 
\[
 b^2-1 
 =\frac{\sqrt{\lambda_1-1} [ \sqrt{\lambda_1-1} (1-\lambda+\lambda_2)+\sqrt{\lambda_1(1-\lambda+\lambda_2)^2-(1+\lambda-\lambda_2)^2} ]}{2}. 
\]
For $\lambda\lambda_2< 1$, we have 
\begin{align*}
b^2 &= \frac{1}{2} \bigg[1+\lambda\lambda_2 + (\lambda_1-1)(1-\lambda+\lambda_2) 
+ \bigg((1-\lambda\lambda_2)^2 +(\lambda_1-1) [ (\lambda_1-1)(1-\lambda+\lambda_2)^2 \\
& \hspace{200pt} -2 (-1+\lambda+(-1+\lambda+\lambda^2) \lambda_2-\lambda\lambda_2^2)]\bigg)^{1/2}\, \bigg] \\
&= \frac{1+\lambda\lambda_2 + |1-\lambda\lambda_2|}{2} +(\lambda_1-1) (\cdots) 
 = 1 +(\lambda_1-1) (\cdots)
\end{align*}
It follows from $1-k^2 = (\lambda_1^2-1)/(\lambda_2^2-1)$ that, for $\lambda \lambda_2 \leq 1$, we find 
\begin{align*}
0 & \leq (b^2-1) F (\varphi,k) \\
& \leq (b^2-1) K (k) 
=
\begin{cases} 
\sqrt{\frac{\lambda_2^2-1}{\lambda_1+1}} (\cdots)\sqrt{1-k^2} K (k) & (\lambda\lambda_2=1) \\
\frac{(\lambda_2^2-1) (\cdots)}{\lambda_1+1} (1-k^2) K (k) & (\lambda\lambda_2<1) 
\end{cases} 
\to 0.  
\end{align*}
We next have 
\[
\int_{b^2}^{\lambda_2^2} \frac{s-b^2}{\sqrt{s(s-\lambda^2)(s-1)(s-\lambda_1^2)(\lambda_2^2-s)}}ds 
\geq 
\frac{\lambda_2}{\sqrt{\lambda_2^2-\lambda^2}} \int_{b^2}^{\lambda_2^2} \frac{s-b^2}{s\sqrt{(s-1)(s-\lambda_1^2)(\lambda_2^2-s)}}ds. 
\]
By  \textbf{[236.18]} and \textbf{[340.01]}, we have 
\[
\int_{b^2}^{\lambda_2^2} \frac{s-b^2}{s\sqrt{(s-1)(s-\lambda_1^2)(\lambda_2^2-s)}}ds 
 = \frac{2}{\lambda_2^2 \sqrt{\lambda_2^2-1}} \left[ \lambda_2^2 F (\psi,k) 
- b^2 \Pi (\psi,(\alpha^{\prime})^2,k) \right], 
\]
where 
\[
k^2 = \frac{\lambda_2^2-\lambda_1^2}{\lambda_2^2-1}, \qquad 
\sin \psi = \sqrt{\frac{\lambda_2^2-b^2}{\lambda_2^2-\lambda_1^2}}
 =\frac{1}{k} \sqrt{\frac{\lambda_2^2-b^2}{\lambda_2^2-1}}, \qquad
 (\alpha^{\prime})^2 = \frac{ \lambda_2^2-\lambda_1^2 }{\lambda_2^2}<k^2. 
\]
As a consequence, we have 
\begin{equation}\label{elliptic_trans}
\frac{1}{2}\int_{b^2}^{\lambda_2^2} \frac{s-b^2}{\sqrt{s(s-\lambda^2)(s-1)(s-\lambda_1^2)(\lambda_2^2-s)}}ds 
\geq 
\frac{\lambda_2^2 F (\psi,k) 
- b^2 \Pi (\psi,(\alpha^{\prime})^2,k)}{\lambda_2 \sqrt{(\lambda_2^2-1)(\lambda_2^2-\lambda^2)}}. 
\end{equation}
By means of \eqref{key_estimates_1}, \eqref{elliptic_trans}, we find 
\[
\xi_2 \geq -\frac{\lambda_1^2 (b^2-1) F (\varphi,k) 
- b^2 (\lambda_1^2-1) \Pi (\varphi,\alpha^2,k) }{\lambda_1 \sqrt{(\lambda_2^2-1)(\lambda_1^2-\lambda^2)}} 
 + \frac{\lambda_2^2 F (\psi,k) 
- b^2 \Pi (\psi,(\alpha^{\prime})^2,k) }{\lambda_2 \sqrt{(\lambda_2^2-1)(\lambda_2^2-\lambda^2)}}.
\]
Since $\lambda\lambda_2\leq 1$, the first term of the right-hand side converges to $0$ as $\lambda_1\to 1$. 
It follows from \textbf{[111.04]} that the second term of the right-hand side converses to 
\[
 \frac{\lambda_2^2 \log (\tan\psi+\frac{1}{\cos\psi})
- \frac{1}{1-(\alpha^{\prime})^2} [\log (\tan\psi+\frac{1}{\cos\psi}) - \frac{\alpha^{\prime}}{2} \log \frac{1+\alpha^{\prime} \sin\psi}{1-\alpha^{\prime} \sin\psi}]}{\lambda_2 \sqrt{(\lambda_2^2-1)(\lambda_2^2-\lambda^2)}} 
 = \frac{\frac{\alpha^{\prime}}{2}\lambda_2 \log \frac{1+\alpha^{\prime}}{1-\alpha^{\prime}}}{\sqrt{(\lambda_2^2-1)(\lambda_2^2-\lambda^2)}}>0,
\]
which completes the proof. 

\subsubsection{Proof of Lemma~\ref{lm:5}} 

We first have 
\begin{align*}
\xi_2 
&= \frac{1}{2}
\int_{\lambda_1^2}^{\lambda_2^2} \frac{s-\lambda_1^2}{\sqrt{s(s-\lambda^2)(s-1)(s-\lambda_1^2)(\lambda_2^2-s)}}ds \\ 
&\hspace{100pt}
 - \frac{b^2-\lambda_1^2}{2}\int_{\lambda_1^2}^{\lambda_2^2} \frac{ds}{\sqrt{s(s-\lambda^2)(s-1)(s-\lambda_1^2)(\lambda_2^2-s)}} \\
& \geq 
\frac{\lambda_2}{2 \sqrt{\lambda_2^2-\lambda^2}}
\int_{\lambda_1^2}^{\lambda_2^2} \frac{s-\lambda_1^2}{s\sqrt{(s-1)(s-\lambda_1^2)(\lambda_2^2-s)}}ds \\ 
&\hspace{100pt} 
- \frac{\lambda_1(b^2-\lambda_1^2)}{2\sqrt{\lambda_1^2-\lambda^2}}\int_{\lambda_1^2}^{\lambda_2^2} \frac{ds}{s\sqrt{(s-1)(s-\lambda_1^2)(\lambda_2^2-s)}}. 
\end{align*}
By \textbf{[236.02]}, \textbf{[236.18]},
we have 
\begin{align*}
& \int_{\lambda_1^2}^{\lambda_2^2} \frac{s-\lambda_1^2}{s\sqrt{(s-1)(s-\lambda_1^2)(\lambda_2^2-s)}}ds 
= \frac{2}{\lambda_2^2\sqrt{\lambda_2^2-1}} [-\lambda_1^2 \Pi (\alpha^2,k) +\lambda_2^2 K (k) ], \\
& \int_{\lambda_1^2}^{\lambda_2^2} \frac{ds}{s\sqrt{(s-1)(s-\lambda_1^2)(\lambda_2^2-s)}} 
= \frac{2}{\lambda_2^2\sqrt{\lambda_2^2-1}} \Pi (\alpha^2,k), 
\end{align*}
where 
\[
k^2 = \frac{\lambda_2^2-\lambda_1^2}{\lambda_2^2-1}, \quad
\alpha^2 = \frac{\lambda_2^2-\lambda_1^2}{\lambda_2^2}<k^2.
\]
Hence, we have 
\begin{equation}\label{key_6}
\xi_2 
\geq \frac{1}{\lambda_2^2\sqrt{\lambda_2^2-1}}
\Bigg[ 
-\lambda_1 \left( \frac{\lambda_1\lambda_2}{\sqrt{\lambda_2^2-\lambda^2}} +\frac{b^2-\lambda_1^2}{\sqrt{\lambda_1^2-\lambda^2}}\right) \Pi (\alpha^2,k) \\
+\frac{\lambda_2^3}{\sqrt{\lambda_2^2-\lambda^2}} K (k) 
\Bigg]. 
\end{equation}
Since the integrand of the left-hand side in \textbf{[362.10]} is non-negative, 
we find 
\[
[k^2+\alpha^2 (1-k^2)] K (k) \geq \alpha^2 E (k) + (1-\alpha^2) k^2 \Pi (\alpha^2,k). 
\]
Similarly, by means of \textbf{[362.07]}, we have $E(k) \geq (1-k^2) \Pi (\alpha^2,k)$. 
Consequently, we find 
\[
K (k) \geq \frac{k^2+\alpha^2 -2\alpha^2 k^2}{k^2+\alpha^2 -\alpha^2 k^2} \Pi (\alpha^2,k). 
\]
Hence, we have 
\begin{align*}
& -\lambda_1 \left( \frac{\lambda_1\lambda_2}{\sqrt{\lambda_2^2-\lambda^2}} +\frac{b^2-\lambda_1^2}{\sqrt{\lambda_1^2-\lambda^2}}\right) \Pi (\alpha^2,k) +\frac{\lambda_2^3}{\sqrt{\lambda_2^2-\lambda^2}} K (k) \\
&\qquad \qquad \geq \bigg[ 
\frac{\lambda_2 (\lambda_2^2-\lambda_1^2)(\lambda_1^2-1)}{(\lambda_1^2+\lambda_2^2-1) \sqrt{\lambda_2^2-\lambda^2}}-\frac{\lambda_1 (b^2-\lambda_1^2)}{\sqrt{\lambda_1^2-\lambda^2}}
\bigg] \Pi (\alpha^2,k),  
\end{align*}
and the right-hand side is non-negative if and only if 
\[
\lambda_2^2 (\lambda_2^2-\lambda_1^2)^2 (\lambda_1^2-1)^2 (\lambda_1^2-\lambda^2)
-\lambda_1^2(\lambda_1^2+\lambda_2^2-1)^2 (\lambda_2^2-\lambda^2)(b^2-\lambda_1^2)^2\geq 0. 
\]
In order to reduce this inequality to a polynomial of a single valuable $\lambda_1$, 
we now choose $\lambda=1/2$, $\lambda_2=11/10=1.1$. In this case, 
$\alpha/2-\lambda_1^2<0$ for $1\leq \lambda_1\leq \lambda_2=11/10$. 

Note that 
\begin{align*}
(b^2-\lambda_1^2)^2 
&= \left(\frac{\alpha}{2}-\lambda_1^2+\frac{\sqrt{\alpha^2-4\lambda\lambda_1\lambda_2}}{2}\right)^2 \\
&= \left(\frac{\alpha}{2}-\lambda_1^2\right)^2 +\frac{\alpha^2-4\lambda\lambda_1\lambda_2}{4}
+\underbrace{\left(\frac{\alpha}{2}-\lambda_1^2\right)}_{< 0}\sqrt{\alpha^2-4\lambda\lambda_1\lambda_2}. 
\end{align*}
Also, we may estimate, for $1\leq \lambda_1\leq \lambda_2=11/10$, 
\[
\lambda_2^2 (\lambda_2^2-\lambda_1^2)^2 (\lambda_1^2-1)^2 (\lambda_1^2-\lambda^2) 
-\lambda_1^2(\lambda_1^2+\lambda_2^2-1)^2 (\lambda_2^2-\lambda^2)
\bigg[ \left(\lambda_1^2-\frac{\alpha}{2}\right)^2 +\frac{\alpha^2-4\lambda\lambda_1\lambda_2}{4} \bigg]<0. 
\]
Thus, the above inequality can be reduced to 
\begin{align*}
& -\bigg[\lambda_2^2 (\lambda_2^2-\lambda_1^2)^2 (\lambda_1^2-1)^2 (\lambda_1^2-\lambda^2) 
-\lambda_1^2(\lambda_1^2+\lambda_2^2-1)^2 (\lambda_2^2-\lambda^2)
\bigg[ \left(\lambda_1^2-\frac{\alpha}{2}\right)^2 +\frac{\alpha^2-4\lambda\lambda_1\lambda_2}{4} \bigg]\bigg]^2 \\
& \hspace{170pt} +\bigg[\lambda_1^2(\lambda_1^2+\lambda_2^2-1)^2 (\lambda_2^2-\lambda^2)
\left(\lambda_1^2-\frac{\alpha}{2}\right)\bigg]^2 (\alpha^2-4\lambda\lambda_1\lambda_2)\geq 0. 
\end{align*}
The left-hand side of the above is a polynomial of $\lambda_1$. 
In fact, it is equal to a constant multiple of $(\lambda_2-\lambda_1)^2(\lambda_1^2-1)(\lambda_1+\lambda) \varphi (\lambda_1)$, 
where 
$\varphi (\lambda_1)$ is a polynomial of $\lambda_1$ 
which is positive for $1\leq \lambda_1\leq \lambda_2=11/10$. 
Since a polynomial of $\lambda$, $\lambda_1$, $\lambda_2$ is continuous, we obtain the proof. 

\subsection{Estimate for $\xi_2$ from above}

Straightforward calculations imply 
\[
\xi_2 
= \frac{1}{2}
\int_{\lambda_1^2}^{\lambda_2^2} \frac{s}{\sqrt{s(s-\lambda^2)(s-1)(s-\lambda_1^2)(\lambda_2^2-s)}}ds 
-\frac{b^2}{2} \int_{\lambda_1^2}^{\lambda_2^2} \frac{ds}{\sqrt{s(s-\lambda^2)(s-1)(s-\lambda_1^2)(\lambda_2^2-s)}}
\]
and  
\begin{align}
 \int_{\lambda_1^2}^{\lambda_2^2} \frac{s}{\sqrt{s(s-\lambda^2)(s-1)(s-\lambda_1^2)(\lambda_2^2-s)}}ds 
&\leq 
 \frac{\sqrt{\lambda_2^2-\lambda^2}}{\lambda_2}
\int_{\lambda_1^2}^{\lambda_2^2} \frac{s}{(s-\lambda^2)\sqrt{(s-1)(s-\lambda_1^2)(\lambda_2^2-s)}}ds, 
\label{period_estimate_1} \\
\int_{\lambda_1^2}^{\lambda_2^2} \frac{d s}{\sqrt{s(s-\lambda^2)(s-1)(s-\lambda_1^2)(\lambda_2^2-s)}} 
& \geq 
 \frac{\sqrt{\lambda_1^2-\lambda^2}}{\lambda_1}
\int_{\lambda_1^2}^{\lambda_2^2} \frac{d s}{(s-\lambda^2)\sqrt{(s-1)(s-\lambda_1^2)(\lambda_2^2-s)}} 
\label{period_estimate_2}. 
\end{align}
By \textbf{[235.18]} and \textbf{[340.01]}, 
the right-hand side of \eqref{period_estimate_1} becomes 
\begin{align*}
&\int_{\lambda_1^2}^{\lambda_2^2} \frac{s}{(s-\lambda^2)\sqrt{(s-1)(s-\lambda_1^2)(\lambda_2^2-s)}}ds \\
&\qquad =\frac{2}{(1-\lambda^2)(\lambda_1^2-\lambda^2)\sqrt{\lambda_2^2-1}}
[ (\lambda_1^2-\lambda^2) K(k) - \lambda^2 (\lambda_1^2-1) \Pi (\alpha^2,k)], 
\end{align*}
where 
\[
k^2 = \frac{\lambda_2^2-\lambda_1^2}{\lambda_2^2-1}, \qquad 
\alpha^2 = \frac{(\lambda_2^2-\lambda_1^2)(1-\lambda^2)}{(\lambda_2^2-1)(\lambda_1^2-\lambda^2)} < k^2, \qquad
\alpha_1^2 = \frac{\lambda_2^2-\lambda_1^2}{\lambda_1^2(\lambda_2^2-1)}. 
\]
By \textbf{[235.17]} 
and \textbf{[339.01]}, 
the right-hand side of \eqref{period_estimate_2} becomes 
\begin{align*}
&\int_{\lambda_1^2}^{\lambda_2^2} \frac{ds}{(s-\lambda^2)\sqrt{(s-1)(s-\lambda_1^2)(\lambda_2^2-s)}} \\
&\qquad =\frac{2}{(1-\lambda^2)(\lambda_1^2-\lambda^2)\sqrt{\lambda_2^2-1}}
[ (\lambda_1^2-\lambda^2) K(k) - (\lambda_1^2-1) \Pi (\alpha^2,k)], 
\end{align*}
where 
\[
k^2 = \frac{\lambda_2^2-\lambda_1^2}{\lambda_2^2-1}, \quad 
\alpha^2 = \frac{(\lambda_2^2-\lambda_1^2)(1-\lambda^2)}{(\lambda_2^2-1)(\lambda_1^2-\lambda^2)} < k^2. 
\]
Hence, we have 
\begin{align*}
\xi_2 &\leq \frac{1}{\lambda_1 \lambda_2 (1-\lambda^2)(\lambda_1^2-\lambda^2)\sqrt{\lambda_2^2-1}} 
 \bigg[ \lambda_1\sqrt{\lambda_2^2-\lambda^2} \bigg[(\lambda_1^2-\lambda^2) K(k) - \lambda^2 (\lambda_1^2-1) \Pi (\alpha^2,k) \bigg] \\
&\hspace{160pt} - b^2 \lambda_2\sqrt{\lambda_1^2-\lambda^2} \bigg[(\lambda_1^2-\lambda^2) K(k) - (\lambda_1^2-1) \Pi (\alpha^2,k) \bigg] \bigg]. 
\end{align*}
It corresponds to Case~{III} in \textbf{[400.01]}. By virtue of \textbf{[414.01]}, setting 
\[
\sin \beta = \frac{\alpha}{k} = \sqrt{\frac{1-\lambda^2}{\lambda_1^2-\lambda^2}}, 
\]
we have 
\begin{align}
\nonumber &\lambda_1\sqrt{\lambda_2^2-\lambda^2} \bigg[(\lambda_1^2-\lambda^2) K(k) - \lambda^2 (\lambda_1^2-1) \Pi (\alpha^2,k) \bigg] \\
\nonumber &\qquad \qquad \qquad - b^2 \lambda_2\sqrt{\lambda_1^2-\lambda^2} \bigg[(\lambda_1^2-\lambda^2) K(k) - (\lambda_1^2-1) \Pi (\alpha^2,k) \bigg] \\
\nonumber & = \lambda_2\bigg[\lambda_1\sqrt{1-\frac{\lambda^2}{\lambda_2^2}} \bigg[\lambda_1^2 (1-\lambda^2)  -\lambda^2\sqrt{ \frac{(1-\lambda^2)(\lambda_1^2-\lambda^2)(\lambda_2^2-1)}{\lambda_2^2-\lambda^2}} Z(\beta,k) \bigg] \\
& \qquad -\frac{b^2}{\lambda_2}\sqrt{\lambda_1^2-\lambda^2} \lambda_2 \bigg[1-\lambda^2  -\sqrt{ \frac{(1-\lambda^2)(\lambda_1^2-\lambda^2)(\lambda_2^2-1)}{\lambda_2^2-\lambda^2}} Z(\beta,k) \bigg] \bigg] K(k), 
\label{Jacobi_Zeta_estimate}
\end{align}
where $Z(\beta,k)$ is the Jacobian Zeta function. 

\subsubsection{Proof of Lemma~\ref{lm:6}} 

For an arbitrary fixed $\lambda_1>1$, we consider the case $\lambda_2\to \infty$, where $\lambda_2$ depends on $\lambda_1$. In this case, $k\to 1$ and $\beta$ is invariant. 

It follows from \textbf{[111.04]}, \textbf{[140.01]} that $Z (\beta,k) \to Z(\beta,1)=E(\beta,1)=\sin\beta$ holds. 
Note that 
\[
\lambda_1\sqrt{1-\frac{\lambda^2}{\lambda_2^2}}\bigg[\lambda_1^2 (1-\lambda^2)  -\lambda^2\sqrt{ \frac{(1-\lambda^2)(\lambda_1^2-\lambda^2)(\lambda_2^2-1)}{\lambda_2^2-\lambda^2}} Z(\beta,k)\bigg] \to \lambda_1(1-\lambda^2) (\lambda_1^2-\lambda^2)
\]
and
\[
\frac{b^2}{\lambda_2}\sqrt{\lambda_1^2-\lambda^2}\to (\lambda_1+\lambda-1)\sqrt{\lambda_1^2-\lambda^2}.
\]
On the other hand, since 
\[
1-\lambda^2  -\sqrt{ \frac{(1-\lambda^2)(\lambda_1^2-\lambda^2)(\lambda_2^2-1)}{\lambda_2^2-\lambda^2}} Z(\beta,k) \to 0, 
\]
the product
\[
\lambda_2 \bigg[1-\lambda^2  -\sqrt{ \frac{(1-\lambda^2)(\lambda_1^2-\lambda^2)(\lambda_2^2-1)}{\lambda_2^2-\lambda^2}} Z(\beta,k) \bigg]
\]
is an indeterminant product.  Thus, we need to check the order of this product as $\lambda_2\to \infty$. 

By virtue of \textbf{[900.05]}, \textbf{[900.10]}, \textbf{[902.01]}, \textbf{[903.01]}, we have 
\begin{align*}
Z(\beta,k) 
& = \frac{1}{K (k)} [ K (k) E (\beta,k) - E (k) F(\beta,k)] \\
& = \frac{1}{\log \frac{4}{\sqrt{1-k^2}}+\frac{1}{4}[ \log \frac{4}{\sqrt{1-k^2}}-1 ](1-k^2)+\cdots} 
\bigg( \sin\beta\log \frac{4}{\sqrt{1-k^2}}-\log \frac{1+\sin\beta}{\cos\beta} \\
& \hspace{85pt} +\bigg[ -\frac{1}{4} \sin\beta\log \frac{4}{\sqrt{1-k^2}} 
   -\frac{1}{4} \sin\beta+\frac{\sin\beta}{4\cos^2\beta} \bigg] (1-k^2) +\cdots\bigg). 
\end{align*}
It follows from 
\[
1-k^2 = \frac{\lambda_1^2-1}{\lambda_2^2-1}
\]
and 
\[
\left(\sqrt{\lambda_1^2-\lambda^2}+\sqrt{1-\lambda^2}\right)^2-\sqrt{\lambda_1^2-1}^2
=2(1-\lambda^2) +2\sqrt{(1-\lambda^2)(\lambda_1^2-\lambda^2)} >0 
\]
that 
\[
\frac{\lambda_2 \log \frac{1+\sin\beta}{\cos\beta}}{\log \frac{4}{\sqrt{1-k^2}}} 
= \frac{\lambda_2 \log \frac{\sqrt{\lambda_1^2-\lambda^2}+\sqrt{1-\lambda^2}}{\sqrt{\lambda_1^2-1}}}{\log \frac{4}{\sqrt{1-k^2}}} \to\infty. 
\] 
Hence, as $\lambda_2\to \infty$, the important part of 
\begin{align*}
\lambda_2 \bigg[ 1-\lambda^2  -\sqrt{ \frac{(1-\lambda^2)(\lambda_1^2-\lambda^2)(\lambda_2^2-1)}{\lambda_2^2-\lambda^2}} Z(\beta,k)  \bigg]
\end{align*}
is 
\begin{align*}
& \lambda_2 \bigg[ 1-\lambda^2 -(1-\lambda^2)\sqrt{ \frac{\lambda_2^2-1}{\lambda_2^2-\lambda^2}}
+\frac{\log \frac{\sqrt{\lambda_1^2-\lambda^2}+\sqrt{1-\lambda^2}}{\sqrt{\lambda_1^2-1}}}{\log \frac{4}{\sqrt{1-k^2}}} 
\sqrt{ \frac{(1-\lambda^2)(\lambda_1^2-\lambda^2)(\lambda_2^2-1)}{\lambda_2^2-\lambda^2}}\bigg] \\
&=\lambda_2 \bigg[ \frac{(1-\lambda^2)^2}{(\lambda_2^2-\lambda^2) \bigg( 1+\sqrt{ \frac{\lambda_2^2-1}{\lambda_2^2-\lambda^2}} \bigg)} 
+\frac{\log \frac{\sqrt{\lambda_1^2-\lambda^2}+\sqrt{1-\lambda^2}}{\sqrt{\lambda_1^2-1}}}{\log \frac{4}{\sqrt{1-k^2}}} 
\sqrt{ \frac{(1-\lambda^2)(\lambda_1^2-\lambda^2)(\lambda_2^2-1)}{\lambda_2^2-\lambda^2}}\bigg] 
\to \infty.
\end{align*}
Therefore, $\eqref{Jacobi_Zeta_estimate}\to -\infty$, and the proof is completed. 

\section*{Acknowledgements}
The authors thank Professors Wayne Rossman, Kenji Takeda, and Martin Traizet for fruitful discussions. 

%

\begin{thebibliography}{10}
%
\bibitem{bf} P.~F.~Byrd and M.~D.~Friedman, 
\textit{Handbook of elliptic integrals for engineers and scientists. 
Second edition, revised},
Die Grundlehren der mathematischen Wissenschaften Band 67, 
Springer-Verlag, New York-Heidelberg, 1971.
%
%
\bibitem{cw1} P. Connor and M. Weber,
\textit{The construction of doubly periodic minimal surfaces via balance equations}, 
Amer. J. Math., \textbf{134} (2012), 1275--1301.
%
\bibitem{dor} M. Dorff, 
\textit{Minimal graphs in $\mathbb{R}^3$ over convex domains},
Proc. Amer. Math. Soc., \textbf{132} (2004), 491--498.
%
%
\bibitem{ka4} H. Karcher, 
\textit{Embedded minimal surfaces derived from Scherk's examples},
Manuscripta Math., \textbf{62} (1988), 83--114.
%
\bibitem{ka6} \bysame, 
\textit{Construction of Minimal Surfaces},
Surveys in Geometry, 1--96, University of Tokyo, 1989.
%
\bibitem{ku} W. Kulpa, 
\textit{The Poincar\'e-Miranda theorem},
Amer. Math. Monthly, \textbf{104} (1997), 545--550.
%
\bibitem{mr3} W. H. Meeks III and H. Rosenberg, 
\textit{The Global Theory of Doubly Periodic Minimal Surfaces},
Invent. Math., \textbf{97} (1989), 351--379.
%
%
\bibitem{sche1} H. F. Scherk, 
\textit{Bemerkungen \"{u}ber die kleinste {F}l\"{a}che {I}nnerhalb gegebener {G}renzen},
J. Reine Angew. Math., \textbf{13} (1835), 185--208.
%
\bibitem{web} M. Weber, 
\textit{Minimal Surface Repository},
\texttt{https://minimalsurfaces.blog}.
%
%
\bibitem{wei2} F. Wei, 
\textit{Some existence and uniqueness theorems for doubly periodic minimal surfaces},
Invent. Math., \textbf{109} (1992), 113--136.
\end{thebibliography}
\bibliographystyle{amsplain}

\end{document}